\DeclareMathAlphabet{\mathbbold}{U}{bbold}{m}{n}	
\theoremstyle{plain}
\newtheorem{theorem}{Theorem}
\newtheorem*{theorem*}{Theorem}
\newtheorem{proposition}[theorem]{Proposition}
\newtheorem{lemma}[theorem]{Lemma}
\newtheorem*{lemma*}{Lemma}
\theoremstyle{definition}
\newtheorem{definition}[theorem]{Definition}
\theoremstyle{remark}
\newtheorem{remark}[theorem]{Remark}
\newtheorem{example}[theorem]{Example}
\setlist[itemize]{leftmargin=1cm}
\newcommand{\R}{\mathbb{R}}
\newcommand{\Heis}{\mathbb{H}}
\newcommand{\distr}{\mathcal{D}}
\newcommand{\eps}{\varepsilon}
\newcommand{\sdistr}{\mathcal{F}}					
\newcommand{\dd}{\,{\mathrm d}}
\newcommand{\db}{{\mathrm d}}
\newcommand{\car}{C}
\newcommand{\pt}{\partial}
\newcommand{\vol}{\Omega}
\newcommand{\quasi}{\zeta}
\newcommand{\ads}{H}
\newcommand{\Sinads}{\widetilde{H}}
\let\oldtocsection=\tocsection 
\let\oldtocsubsection=\tocsubsection
\renewcommand{\tocsection}[2]{\hspace{0em}\oldtocsection{#1}{#2}}
\renewcommand{\tocsubsection}[2]{\hspace{1em}\oldtocsubsection{#1}{#2}}
\author[Davide Barilari]{Davide Barilari$^\flat$}
\address{$^\flat\,$Dipartimento di Matematica ``Tullio Levi-Civita'',
  Universit\`a degli Studi di Padova, Via Trieste 63,
  Padova, Italy.}
\email{\href{mailto:davide.barilari@unipd.it}{davide.barilari@unipd.it}}
\author[Karen Habermann]{Karen Habermann$^\sharp$}
\address{$^\sharp\,$Department of Statistics, University of Warwick, Coventry, CV4 7AL, United Kingdom.}
\email{\href{mailto:karen.habermann@warwick.ac.uk}{karen.habermann@warwick.ac.uk}}
\title[Intrinsic sub-Laplacian for hypersurface]{Intrinsic
  sub-Laplacian for hypersurface in a contact sub-Riemannian manifold}
\subjclass[2020]{53C17, 53B25, 58J65}
\keywords{Sub-Riemannian geometry, Contact manifold, Hypersurfaces,
  Model spaces, Sub-Laplacian, Radial process, Pfaffian equations}
\date{\today}
\begin{document}

\begin{abstract}
  We construct and study the intrinsic sub-Laplacian,
  defined outside the set of characteristic points,
  for a smooth hypersurface embedded in a contact sub-Riemannian manifold.
  We prove that, away from characteristic points, the intrinsic
  sub-Laplacian arises as the limit of Laplace--Beltrami operators built
  by means of Riemannian approximations to the sub-Riemannian
  structure using the Reeb vector field. We carefully analyse three
  families of model cases for this setting obtained by considering
  canonical hypersurfaces embedded in model spaces for contact
  sub-Riemannian manifolds. In these model
  cases, we show that
  the intrinsic sub-Laplacian is stochastically complete and in
  particular, that the stochastic process induced by the intrinsic
  sub-Laplacian almost surely does not hit characteristic points.
\end{abstract}

\maketitle

\setcounter{tocdepth}{1}
\tableofcontents
\section{Introduction}
Recent years have seen increased activity in the study of hypersurfaces
embedded in contact sub-Riemannian manifolds, with
notable subtleties as well as distinctions
compared to the
Riemannian setting arising in the presence of characteristic points.
These are points on the hypersurface where the tangent space coincides with
the contact hyperplane.

First works in this direction have concerned the study of geometry of
hypersurfaces in Heisenberg groups, and more generally in Carnot
groups, in particular related to 
a notion of horizontal mean curvature and isoperimetric
inequalities. For a review of
these topics, see~\cites{DGN07,CDPT} and references
therein.

For surfaces in the Heisenberg group
the horizontal mean curvature, which blows up at
characteristic points, is locally integrable
with respect to the sub-Riemannian
perimeter measure, as shown by Danielli, Garofalo and Nhieu.  Their
conjecture given in~\cite{DGN} that around isolated characteristic
points
the horizontal mean curvature
is also locally integrable with respect to the Riemannian induced
measure is verified by Rossi~\cite{tommaso} for characteristic points which
are isolated and mildly degenerate.
Rizzi and Rossi~\cite{RR} give an example
for a domain in the Heisenberg group where a higher-order coefficient
in the asymptotic expansion for the heat content of
smooth non-characteristic domains blows up at an isolated
characteristic point.

Diniz and Veloso~\cite{DV}, assuming absence of characteristic
points,
and Balogh, Tyson and Vecchi~\cites{BTC,BTCcorr}, allowing for
characteristic points, introduce a
Gauss--Bonnet theorem for surfaces in the Heisenberg group, which
is extended by
Veloso~\cite{veloso} to surfaces without characteristic points in general
three-dimensional contact sub-Riemannian manifolds.
A Gauss--Bonnet
theorem recovering topological
information concentrated around the characteristic points
is obtained by Grong,
Hidalgo Calderón and Vega-Molino~\cite{GHV} for surfaces in
three-dimensional contact sub-Riemannian
manifolds.

The work~\cite{BBC} analyses the metric structure, particularly near
characteristic points, induced on surfaces
embedded
in three-dimensional contact sub-Riemannian
manifolds, and \cite{BBCH} introduces and studies properties of a
canonical stochastic process on surfaces
in three-dimensional contact sub-Riemannian
manifolds which exhibits different
behaviours near an elliptic characteristic point and a hyperbolic
characteristic point.

The present article aims to initiate further studies of hypersurfaces
embedded in higher-dimensional contact sub-Riemannian manifolds. We
intrinsically construct a sub-Laplacian on hypersurfaces
in contact sub-Riemannian manifolds, which for surfaces in
three-dimensional contact sub-Riemannian manifolds gives rise to the 
generator of the stochastic process obtained in~\cite{BBCH} by
means of
Riemannian approximations, and we use our analysis to propose model
cases for this setting.
Some notions such as horizontal connectivity, horizontal connection and
horizontal mean curvature on hypersurfaces in sub-Riemannian manifolds
are studied by Tan and Yang~\cite{TY}.

We close this literature overview by highlighting that whilst
the powerful convex surface theory in three-dimensional contact
topology, that is, without additionally equipping the contact
structure with a fibre inner product, 
has been introduced by Giroux~\cite{giroux1}, the theory of convex
hypersurfaces in higher-dimensional contact topology is still a relatively
new endeavour, see Honda and Huang~\cite{CHT}.

\medskip

Let $M$ be a smooth manifold of dimension $2n+1$ for $n\geq 1$, let
$\distr$ be a contact structure on $M$, and let $g$ be a smooth fibre
inner product on $\distr$. Since this gives rise to a contact manifold
$(M,\distr)$
and as $(\distr,g)$ defines a sub-Riemannian structure on
the manifold $M$, the
triple $(M,\distr,g)$ is called a contact sub-Riemannian
manifold. Throughout, we shall assume that there exists a
global one-form $\omega$ on $M$ such that $\distr=\ker\omega$ and
$\omega\wedge\left(\db\omega\right)^{n}\neq 0$. Such a global one-form
$\omega$ is called contact form for the contact structure
$\distr$. The existence of a contact form $\omega$ ensures that the
manifold $M$ is orientable as it can then be oriented by the
volume form
\begin{equation}\label{eq:defnOmega}
  \vol=\omega\wedge\left(\db\omega\right)^{n}\;.
\end{equation}
We shall further assume that the one-form $\omega$ is normalised such that
\begin{equation}\label{eq:normalise}
  \left.(\db\omega)^{n}\right|_{\distr}=n!\,\mathrm{vol}_{g}\;,
\end{equation}
with $\mathrm{vol}_{g}$ denoting the volume form on the
distribution $\distr$ induced by the fibre inner product $g$.
The
Reeb vector field $X_{0}$ on $M$
associated with the contact form $\omega$ is uniquely characterised
by requiring $\omega(X_{0})=1$ and $\db\omega(X_{0},\cdot)=0$.
Subject to the normalisation condition~(\ref{eq:normalise}), a
fixed sub-Riemannian manifold $(M,\distr,g)$ admits a unique Reeb
vector field $X_0$ as there exists a unique one-form $\omega$ on $M$
which both defines the contact structure $\distr$ and
satisfies~(\ref{eq:normalise}).

Let $S$ be an orientable hypersurface embedded in the contact
manifold $(M,\distr)$. We denote by $\car(S)$ the set of
characteristic points of $S$,
namely the set of points $x\in S$ such that
$T_{x}S=\distr_{x}$.
Observe that $\car(S)$ is a closed subset of $S$, which implies that
$S\setminus\car(S)$ is a well-defined hypersurface in $M$.
Outside the set of characteristic points, that is, on
$S\setminus\car(S)$, we define the distribution $\sdistr=\distr\cap
TS$ which by construction has corank one in the tangent bundle
of the hypersurface $S\setminus\car(S)$.
Let $\quasi$ be the one-form on $S\setminus\car(S)$ obtained by
restricting the one-form $\omega$ defined on $M$ to
$S\setminus\car(S)$, and note that the
distribution $\sdistr$ is given as
$\sdistr=\ker\quasi$.

A crucial observation to be made at this stage is that the case $n=1$
needs to be treated differently from the case $n>1$. Indeed, for
$n=1$, the manifold $M$ has dimension three, the hypersurface $S$
is a two-dimensional surface and the distribution $\sdistr$ is a line
field, which is always integrable. On the other hand, if $n>1$, then
$\sdistr$ is a rank $2n-1$ distribution on
a $2n$-dimensional hypersurface. As discussed in more details in
Section~\ref{sec:hyp}, when $n>1$, the distribution $\sdistr$ is
always bracket generating as a result of $\sdistr$ defining a
quasi-contact structure on $S\setminus\car(S)$. In particular, the
kernel $\left.\ker\db\quasi\right|_\sdistr$ has dimension one.

In this article, we construct an intrinsic sub-Laplacian on
$S\setminus\car(S)$ obtained by taking the divergence of the
horizontal gradient on $S\setminus\car(S)$ with respect to a volume
form for $S\setminus\car(S)$ which is the
restriction of the volume form $\vol$ on $M$.
We show that the
intrinsic sub-Laplacian arises as the limit of Laplace--Beltrami operators
built by
means of Riemannian approximations to the sub-Riemannian structure
using the Reeb vector field.
We further determine the radial part of the constructed
intrinsic sub-Laplacian explicitly 
for canonical hypersurfaces in the sphere
$S^{2n+1}$ and the anti-de Sitter space $\ads^{2n+1}$ both equipped with
standard sub-Riemannian contact structures as well as in the
higher-dimensional Heisenberg group $\Heis^n$, which together
constitute model spaces for our setting.

The construction of the intrinsic sub-Laplacian
presented below carries over for any alternative normalisation
condition fixing
the one-form $\omega$ in place of~(\ref{eq:normalise}). The associated
sub-Laplacian then still arises as the limit of Laplace--Beltrami
operators, except that the Reeb vector field used to define the
Riemannian approximations is uniquely characterised in terms of the
new normalisation. As can be observed later, any choice of
normalisation which only changes the one-form $\omega$ by a constant
gives rise to the same sub-Laplacian as obtained subject to the
condition~(\ref{eq:normalise}).

\subsection{Intrinsic sub-Laplacian on hypersurface}
We start with the construction of a sub-Laplacian on the
embedded hypersurface
$S\setminus\car(S)$ which is intrinsic to the contact sub-Riemannian manifold
$(M,\distr,g)$ and the normalisation condition~(\ref{eq:normalise}).
We then state the result that it emerges as the limit of
Laplace--Beltrami operators. This particularly implies that the operator
$\Delta_0$ constructed in~\cite{BBCH} on surfaces in three-dimensional
contact sub-Riemannian manifolds coincides with the intrinsic
sub-Laplacian defined in this article for the case $n=1$.

The sub-Riemannian normal in
$(M,\distr,g)$ to the hypersurface $S$ away from the set
$\car(S)$ of characteristic points is formed from directions
contained in the contact structure $\distr$ and orthogonal to the
distribution $\sdistr$. Once the orientations of $S$ and $M$ are
fixed, we have a unique unit and normal vector field $N$ compatible
with the orientations, which is defined as follows.
\begin{definition}\label{defn:sRnormal}
  The sub-Riemannian normal vector field $N$ along
  the hypersurface $S\setminus\car(S)$ in
  the contact sub-Riemannian manifold $(M,\distr,g)$ is the
  unit-length vector field in the distribution $\distr$, that is,
  \begin{equation}\label{defn:sRnormal1}
    \omega(N)=0\quad\text{and}\quad
    g(N,N)=1\;,
  \end{equation}
  such that, for any vector field $Y$ on $S\setminus\car(S)$ and in the
  distribution $\sdistr$,
  \begin{equation}\label{defn:sRnormal2}
    g(N,Y)=0\;,
  \end{equation}
  and, for any positively oriented local orthonormal frame
  $(Z_{1},\dots,Z_{2n})$ for $S\setminus\car(S)$, the frame
  $(N,Z_{1},\dots,Z_{2n})$ for $M$ is a positively oriented.
\end{definition}

Using the volume form $\vol$ on $M$ given
by~(\ref{eq:defnOmega}) and the sub-Riemannian normal vector field
$N$ along $S\setminus\car(S)$, we define a volume form $\mu$ on
$S\setminus\car(S)$ with respect to which we later take the divergence
when constructing the intrinsic sub-Laplacian on $S\setminus\car(S)$.
\begin{definition}\label{defn:hypervol}
  Let $\mu$ be the volume form defined on $S\setminus\car(S)$ as
  \begin{displaymath}
    \mu=\iota_{N}\vol\;,
  \end{displaymath}
  that is, the contraction of the form $\vol$ with the vector field
  $N$ restricted to $S\setminus\car(S)$.
\end{definition}
From the compatibility of $N$ with the orientations of $M$ and $S$, it
follows that $\mu=\iota_{N}\vol$ is positive on $S\setminus\car(S)$,
meaning it has positive values when evaluated on positively
oriented orthonormal frames.

\smallskip

The final ingredient needed before we can introduce the intrinsic
sub-Laplacian of a
smooth function $f\colon S\setminus\car(S)\to\R$ is the horizontal
gradient $\nabla_S f$.
\begin{definition}
  Let $f\colon S\setminus\car(S)\to\R$ be a smooth function. The
  horizontal gradient $\nabla_S f$ of the function $f$ is the unique
  vector field in the distribution $\sdistr$, that is,
  \begin{displaymath}
    \quasi(\nabla_S f)=0\;,
  \end{displaymath}
  such that, for any vector field $Y$ in $\sdistr$,
  \begin{displaymath}
    g(\nabla_S f,Y)=\db f(Y)\;.
  \end{displaymath}
\end{definition}

In particular, with a local orthonormal frame
$(Y_1,\dots,Y_{2n-1})$ for $\sdistr$, we can write
\begin{equation}\label{eq:horgrad}
  \nabla_S f=\sum_{i=1}^{2n-1} (Y_i f)Y_i\;,
\end{equation}
which follows by noting that, for all $j\in\{1,\dots,2n-1\}$,
\begin{displaymath}
  g(\nabla_S f,Y_j)=
  \sum_{i=1}^{2n-1} (Y_i f)g(Y_i,Y_j)=Y_jf=\db f(Y_j)\;.
\end{displaymath}

The intrinsic sub-Laplacian $\Delta$ on $S\setminus\car(S)$
is constructed as the divergence with respect to the
volume form $\mu$ of the horizontal gradient $\nabla_S$.
\begin{definition}\label{def:intlap}
  The intrinsic sub-Laplacian $\Delta$ for a hypersurface $S\setminus\car(S)$
  embedded in a contact sub-Riemannian manifold $(M,\distr,g)$ is
  given by, for a smooth function $f\colon S\setminus\car(S)\to \R$,
  \begin{displaymath}
    \Delta f =\operatorname{div}_\mu\left(\nabla_S f\right)\;.
  \end{displaymath}
\end{definition}

The sub-Laplacian $\Delta$ defined on $S\setminus\car(S)$ arises as the limit
of Laplace--Beltrami operators on $S$ in the
following way. Let $\theta_0\colon T M\to \R$
  be the unique linear map such that, for the Reeb vector field $X_0$
  on $M$ and for any vector field $X$ in $\distr$,
  \begin{displaymath}
    \theta_0(X_0)=1
    \quad\text{and}\quad
    \theta_0(X)=0\;.
  \end{displaymath}
For $\eps>0$, we consider the Riemannian metric
$\overline{g}^\eps$ on $M$ obtained as
\begin{equation}\label{eq:Rapprox}
  \overline{g}^\eps=g\oplus
  \frac{1}{\eps^2}\left(\theta_0\otimes \theta_0\right)\;.
\end{equation}
We use $i$ for the inclusion map $i\colon S\to M$ and observe
that $i^\ast\overline{g}^\eps$ is the Riemannian metric on $S$ induced
by the Riemannian metric $\overline{g}^\eps$ on $M$.
The Laplace--Beltrami operator $\Delta^\eps$ of the
$2n$-dimensional Riemannian
manifold $(S,i^\ast\overline{g}^\eps)$ then
converges to the intrinsic sub-Laplacian $\Delta$ uniformly on compacts
as $\eps\to 0$.
\begin{theorem}\label{thm:limitlap}
  For any
  smooth function $f\in C_c^\infty(S\setminus\car(S))$
  compactly supported in $S\setminus\car(S)$, the functions
  $\Delta^\eps f$ converge uniformly on $S\setminus\car(S)$ to
  $\Delta f$ as $\eps\to 0$.
\end{theorem}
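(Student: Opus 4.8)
The plan is to fix an arbitrary point of $S\setminus\car(S)$, work in a well-chosen local frame, and reduce the statement to a comparison of two volume forms in which the divergent part turns out to be invisible to the divergence. First I would choose a local $g$-orthonormal frame $(Y_1,\dots,Y_{2n-1})$ for $\sdistr=\distr\cap TS$, so that by Definition~\ref{defn:sRnormal} the sub-Riemannian normal $N$ completes it to a $g$-orthonormal frame $(N,Y_1,\dots,Y_{2n-1})$ of $\distr$ along $S$. Away from $\car(S)$ the normal $N$ is transverse to $S$ and $\sdistr$ has corank one in $TS$, so there is a unique smooth function $c\colon S\setminus\car(S)\to\R$ for which the vector field $T=X_0+cN$ is tangent to $S$; note that $\theta_0(T)=1$. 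Then $(Y_1,\dots,Y_{2n-1},T)$ is a local frame on $S\setminus\car(S)$, and I write $(\eta^1,\dots,\eta^{2n-1},\tau)$ for the dual coframe.

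The computational core is that in this frame the induced metric $h^\eps:=i^\ast\overline{g}^\eps$ is diagonal. Using~\eqref{eq:Rapprox} together with $\theta_0(Y_i)=0$, $g(Y_i,N)=0$ and the splitting $T=cN+X_0$, one checks $h^\eps(Y_i,Y_j)=\delta_{ij}$, $h^\eps(Y_i,T)=0$ and $h^\eps(T,T)=c^2+\eps^{-2}$. Consequently the associated gradient is
\[
  \nabla^\eps f=\sum_{i=1}^{2n-1}(Y_if)\,Y_i+\frac{\eps^2}{c^2\eps^2+1}(Tf)\,T,
\]
which converges, together with its first derivatives, to the horizontal gradient $\nabla_S f$ of~\eqref{eq:horgrad} uniformly on compact subsets, the correction term being $O(\eps^2)$. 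Moreover $\sqrt{\det h^\eps}=\eps^{-1}\sqrt{c^2\eps^2+1}$, so the Riemannian volume form is $\vol_{h^\eps}=\eps^{-1}\sqrt{c^2\eps^2+1}\;\eta^1\wedge\dots\wedge\eta^{2n-1}\wedge\tau$.

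Next I would evaluate $\mu$ in the same frame. By Definition~\ref{defn:hypervol}, $\mu(Y_1,\dots,Y_{2n-1},T)=\vol(N,Y_1,\dots,Y_{2n-1},X_0)$, since the summand $cN$ in $T$ is killed by the repeated $N$. As $\omega(X_0)=1$ while $\omega$ vanishes on $\distr$ and $\iota_{X_0}\db\omega=0$, this collapses to $(\db\omega)^n(N,Y_1,\dots,Y_{2n-1})$, which by the normalisation~\eqref{eq:normalise} equals the constant $n!$. Hence $\mu=n!\,\eta^1\wedge\dots\wedge\eta^{2n-1}\wedge\tau$, and comparing volume forms gives $\vol_{h^\eps}=\rho_\eps\,\mu$ with the positive function $\rho_\eps=\frac{\sqrt{c^2\eps^2+1}}{n!\,\eps}$. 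This identity is the heart of the argument, and the step I expect to require most care: although $\rho_\eps$ blows up like $\eps^{-1}$, the divergent factor is \emph{independent of the point}, so it will drop out of the divergence entirely, and only the benign factor $\sqrt{c^2\eps^2+1}=1+O(\eps^2)$ survives.

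To finish I would use the scaling rule $\operatorname{div}_{\rho_\eps\mu}(V)=\operatorname{div}_\mu(V)+V(\log\rho_\eps)$, valid for any positive $\rho_\eps$, to write
\[
  \Delta^\eps f=\operatorname{div}_\mu\!\left(\nabla^\eps f\right)+\nabla^\eps f\bigl(\log\rho_\eps\bigr).
\]
In the first term, $\nabla^\eps f\to\nabla_S f$ in $C^1$ on the compact support of $f$, and $\operatorname{div}_\mu$ is continuous in this topology, so $\operatorname{div}_\mu(\nabla^\eps f)\to\operatorname{div}_\mu(\nabla_S f)=\Delta f$ uniformly. In the second term I split $\log\rho_\eps=\tfrac12\log(c^2\eps^2+1)-\log\eps-\log n!$; the point-independent summands $-\log\eps-\log n!$ are annihilated by the derivation $\nabla^\eps f$, while $\tfrac12\log(c^2\eps^2+1)=O(\eps^2)$ together with its derivatives on the support of $f$, so that $\nabla^\eps f(\log\rho_\eps)=O(\eps^2)\to0$ uniformly. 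Combining the two limits yields $\Delta^\eps f\to\Delta f$ uniformly on $S\setminus\car(S)$, which is the assertion.
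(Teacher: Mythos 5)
Your proof is correct, and while it shares the paper's key geometric input, it organises the analytic part along a genuinely different route. The vector field $T=X_0+cN$ you construct is exactly the paper's tangential projection $Z=X_0-\frac{X_0u}{Nu}N$ of the Reeb field, and your identity $h^\eps(T,T)=c^2+\eps^{-2}$ is the paper's equation~(\ref{eq:normproj}); up to this point the two arguments coincide. They diverge in how the measures are compared: the paper expresses $\Delta$ and $\Delta^\eps$ in sum-of-squares form, (\ref{eq:subLap}) and (\ref{eq:LBeps}), and then passes to the limit term by term, relying on the convergence of normals $N^\eps\to N$ (Lemma~\ref{lem:normal}), the ambient identity $\eps n!\,\vol^\eps=\vol$ (Lemma~\ref{lem:vol}), and the induced-measure convergence $\eps n!\,\mu^\eps\to\mu$ (Lemma~\ref{lem:surfvol}), with the convergence of first derivatives of the densities only asserted to follow ``similarly''. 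You bypass $N^\eps$ and $\mu^\eps$ entirely: computing the induced metric exactly in the adapted frame $(Y_1,\dots,Y_{2n-1},T)$ and evaluating $\mu(Y_1,\dots,Y_{2n-1},T)=(\db\omega)^n(N,Y_1,\dots,Y_{2n-1})=\pm n!$ via the normalisation~(\ref{eq:normalise}), you obtain the \emph{exact} identity $\vol_{h^\eps}=\rho_\eps\,\mu$ with explicit conformal factor $\rho_\eps=\sqrt{c^2\eps^2+1}/(n!\,\eps)$, after which the scaling rule $\operatorname{div}_{\rho\mu}V=\operatorname{div}_\mu V+V(\log\rho)$ makes the divergent constant $\eps^{-1}$ drop out identically. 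This buys you explicit $O(\eps^2)$ error rates and replaces the paper's softest step (derivative convergence of the densities) with a closed-form computation; what it loses is the intermediate lemmas of independent geometric interest, in particular $N^\eps\to N$. Two small points you should make explicit: the value $n!$ is really $\pm n!$ depending on the orientation of the local frame, a locally constant sign that is harmless for the divergence; and since the frame is only local, the uniform statement on $S\setminus\car(S)$ requires covering the compact support of $f$ by finitely many frame neighbourhoods and noting that $\Delta^\eps f$ and $\Delta f$ both vanish off $\supp f$ by locality.
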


Since the operator $\Delta_0$ introduced in~\cite{BBCH} on surfaces in
three-dimensional contact sub-Riemannian manifolds is constructed as
the limit of Laplace--Beltrami operators
and subject to the normalisation condition
$\left.\db\omega\right|_\distr=-\mathrm{vol}_g$, it follows
from Theorem~\ref{thm:limitlap} that, for $n=1$,
the intrinsic sub-Laplacian $\Delta$ from Definition~\ref{def:intlap}
coincides with the operator $\Delta_0$. Whilst the additional sign in the
normalisation condition compared to~(\ref{eq:normalise}) flips the
direction of the Reeb vector field $X_0$, it does not affect the
operator $\Delta$ because the divergence remains unchanged for measures
differing by a non-zero constant factor.

\subsection{Hypersurfaces in contact
  sub-Riemannian model spaces}
In~\cite{BBCH}, the operator $\Delta_0$ is explicitly determined for
natural choices of surfaces in the three classes of model spaces for
three-dimensional sub-Riemannian structures. We extend these
considerations to higher dimensions by studying the
intrinsic sub-Laplacian $\Delta$ from Definition~\ref{def:intlap}
for canonical hypersurfaces in the three classes of model spaces for contact
sub-Riemannian manifolds. Moreover, we analyse the radial part of
the stochastic process with generator $\frac{1}{2}\Delta$ which is
sufficient to deduce that in all these cases the sub-Laplacian $\Delta$
defined away from
characteristic points is stochastically complete.
At the same time, the geometry induced on each hypersurface minus
characteristic points is not geodesically complete.

The model spaces for contact sub-Riemannian manifolds arise by
equipping the Euclidean space $\R^{2n+1}$, the sphere
$S^{2n+1}$ and the hyperboloid
$\ads^{2n+1}$, respectively, with a
standard contact structure $\distr$ and the following fibre
inner product $g$ on $\distr$.
For $\R^{2n+1}$, we choose $g$ such that $(\R^{2n+1},\distr,g)$ gives
rise to the higher-dimensional Heisenberg group $\Heis^n$.
For
the sphere $S^{2n+1}$ embedded in $\R^{2n+2}$, we choose $k\in\R$
positive and set, with $\langle\cdot,\cdot\rangle$ denoting the
Euclidean inner product on $\R^{2n+2}$,
\begin{displaymath}
  g(\cdot,\cdot)=\frac{1}{k^2}
  \left.\langle\cdot,\cdot\rangle\right|_\distr\;.
\end{displaymath}
This gives rise to a one-parameter family of model spaces with
underlying manifold $S^{2n+1}$ and parameter $k>0$. Similarly,
for the hyperboloid $\ads^{2n+1}$ embedded in $\R^{2n,2}$, we use
the flat Lorentzian metric $\eta$ on $\R^{2n,2}$ and $k\in\R$ positive to
define
\begin{displaymath}
  g(\cdot,\cdot)=\frac{1}{k^2}
  \left.\eta(\cdot,\cdot)\right|_\distr\;,
\end{displaymath}
which yields a one-parameter family of
model spaces with underlying manifold $\ads^{2n+1}$ and parameter $k>0$.
The model spaces for contact sub-Riemannian
manifolds are described in more details in Section~\ref{sec:model}.

The hypersurface which we consider embedded in $\R^{2n+1}$, in
$S^{2n+1}$ and in $\ads^{2n+1}$, respectively, serves as a model
hypersurface in the corresponding model space and can be identified with
$\R^{2n}$, $S^{2n}$ and $\Sinads^{2n}$, respectively, with a unique
characteristic point in the first and third case, and with two
antipodal characteristic points in the second case.
We refer to Section~\ref{sec:heis},
Section~\ref{sec:sphere} and Section~\ref{sec:hyper} for more details
on the choice of the model hypersurface. In our analysis of the model
cases, we first obtain
expressions for the volume form $\vol$ on $M$ and for
the sub-Riemannian normal
vector field $N$
to derive an expression for the volume form $\mu$ on the
hypersurface away from characteristic points.

\begin{proposition}\label{propn:vol}
  Let $(M,\distr,g)$ be a $(2n+1)$-dimensional
  contact sub-Riemannian model space. Set $I=(0,\frac{\pi}{k})$ if
  $M=S^{2n+1}$ associated with parameter $k>0$ and set $I=(0,\infty)$
  otherwise. Define $h_k\colon I\to \R$ by, for $r\in I$,
  \begin{displaymath}
    h_k(r)=
    \begin{cases}
      r    & \text{if }M=\R^{2n+1}\\
      k^{-1}\sin(kr) & \text{if }M=S^{2n+1}\\
      k^{-1}\sinh(kr)& \text{if }M=\ads^{2n+1}
    \end{cases}\;.
  \end{displaymath}
  For the model hypersurface $S$ in
  the model space $(M,\distr,g)$
  and in suitable coordinates $(r,\varphi_1,\dots,\varphi_{2n-1})$ for
  $S\setminus\car(S)$ with $r\in I$,
  $\varphi_1,\dots,\varphi_{2n-2}\in[0,\pi]$ and
  $\varphi_{2n-1}\in[0,2\pi)$, the volume form $\mu$ defined on
  $S\setminus\car(S)$ is given by
  \begin{displaymath}
    \mu=\frac{n!}{2}\left(h_k(r)\right)^{2n}
    \left(\prod_{i=1}^{2n-2}\left(\sin(\varphi_i)\right)^{2n-i-1}\right)
    \db r\wedge \db\varphi_1\wedge\dots\wedge\db\varphi_{2n-1}\;.
  \end{displaymath}
\end{proposition}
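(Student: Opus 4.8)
The plan is to compute $\mu = \iota_N\vol$ directly in the model coordinates by assembling three ingredients: an explicit expression for the ambient volume form $\vol = \omega\wedge(\db\omega)^n$ restricted along $S$, an explicit formula for the sub-Riemannian normal $N$, and the contraction identity. Since the three model spaces $\R^{2n+1}$, $S^{2n+1}$, $\ads^{2n+1}$ are handled in three separate sections of the paper, I would structure the argument so that each case slots into a common framework, with the function $h_k$ (respectively $r$, $k^{-1}\sin(kr)$, $k^{-1}\sinh(kr)$) being the only case-dependent quantity. The key geometric observation is that the model hypersurface carries polar-type coordinates $(r,\varphi_1,\dots,\varphi_{2n-1})$ in which $r$ measures a distinguished radial direction away from the characteristic point, and the angular variables parametrise a round $(2n-1)$-sphere.

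First I would record, in each model section, the frame adapted to the coordinates: a unit radial field, the sub-Riemannian normal $N$ from Definition~\ref{defn:sRnormal}, the Reeb field $X_0$, and a positively oriented orthonormal frame $(Y_1,\dots,Y_{2n-1})$ for $\sdistr=\ker\quasi$. Using the normalisation~(\ref{eq:normalise}), which fixes $\left.(\db\omega)^n\right|_\distr = n!\,\mathrm{vol}_g$, the ambient volume form evaluates as $\vol(X_0,N,Y_1,\dots,Y_{2n-1}) = n!$ on such a positively oriented frame, because $\omega$ kills everything except $X_0$ and $(\db\omega)^n$ restricts to $\distr$. Then $\mu = \iota_N\vol$ satisfies $\mu(X_0,Y_1,\dots,Y_{2n-1}) = \pm n!$, with the sign fixed positive by the compatibility of $N$ with the orientations. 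The factor $\tfrac{1}{2}$ and the metric-volume factor $(h_k(r))^{2n}\prod_{i=1}^{2n-2}(\sin\varphi_i)^{2n-i-1}$ then arise purely from expressing the coordinate coframe $\db r\wedge\db\varphi_1\wedge\dots\wedge\db\varphi_{2n-1}$ in terms of the dual coframe to $(X_0,Y_1,\dots,Y_{2n-1})$; concretely, $r$ turns out to pair with the Reeb direction, while the angular block contributes the standard round-sphere volume density $\prod(\sin\varphi_i)^{2n-i-1}$ scaled by the radial factor $(h_k(r))^{2n}$ coming from the sub-Riemannian metric $g$.

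The step I expect to be the main obstacle is verifying that the radial coordinate $r$ corresponds, up to the universal factor $\tfrac12$, to the Reeb component rather than to a direction inside $\sdistr$, and pinning down how $h_k(r)$ enters the angular scaling. This is where the geometry of each model space genuinely differs: I would exploit the standard contact structure on each of $\R^{2n+1}$, $S^{2n+1}\subset\R^{2n+2}$, and $\ads^{2n+1}\subset\R^{2n,2}$ to write $\omega$ and $\db\omega$ in the chosen coordinates, then read off $g$ from the prescribed fibre inner product (the Euclidean or Lorentzian restriction scaled by $k^{-2}$, or the Heisenberg choice on $\R^{2n+1}$). The function $h_k$ is precisely the warping factor that measures how the induced metric on the angular sphere grows with $r$, so that the $2n$-dimensional volume density picks up $(h_k(r))^{2n}$. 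Because the three expressions for $h_k$ satisfy $h_k'' = -k^2 h_k$, $h_k''=k^2 h_k$, or $h_k''=0$, I anticipate the computations to run in parallel once the sign of the ambient curvature is inserted.

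Finally I would assemble the pieces: combining $\mu(X_0,Y_1,\dots,Y_{2n-1})=n!$ with the change of frame from $(X_0,Y_1,\dots,Y_{2n-1})$ to the coordinate basis $(\partial_r,\partial_{\varphi_1},\dots,\partial_{\varphi_{2n-1}})$ yields the stated density, the half coming from the relation between the Reeb field and $\partial_r$ in these coordinates. I would carry out the explicit frame computation in full only once (say for $S^{2n+1}$), and then indicate that the remaining two cases follow by the same method with $h_k$ replaced accordingly, referring forward to Section~\ref{sec:heis}, Section~\ref{sec:sphere} and Section~\ref{sec:hyper} for the model-specific details.
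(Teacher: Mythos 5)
Your skeleton (express $\vol$, find $N$, contract, change to spherical coordinates) matches the paper's, but the step you yourself flag as the main obstacle resolves in the \emph{opposite} way to what you predict, and as written your frame computation would not close. A first, repairable, issue: $X_0$ is not tangent to $S$ in any of the three models (in $\Heis^n$ one has $X_0=-\pt/\pt x_{2n+1}$, transverse to $\{x_{2n+1}=0\}$; on $S^{2n+1}$ and $\ads^{2n+1}$ the $\pt/\pt x_{2n+2}$-component of $X_0$ is proportional to $x_{2n+1}\neq 0$ on $S\setminus\car(S)$), so the quantity ``$\mu(X_0,Y_1,\dots,Y_{2n-1})$'' only makes sense after projecting $X_0$ tangentially; this is harmless because $\iota_N\iota_N=0$ shows the normal component contributes nothing. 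The genuine error is your claim that $r$ ``pairs with the Reeb direction'' and that the factor $\tfrac12$ comes from the relation between $X_0$ and $\pt/\pt r$. In all three model cases $\pt/\pt r$ lies \emph{inside} $\sdistr$: it is a unit-length spanning field of $\left.\ker\db\quasi\right|_\sdistr$, i.e.\ of the characteristic foliation --- precisely what the paper verifies in each section (see~(\ref{eq:foliainH}) and its analogues in Sections~\ref{sec:sphere} and~\ref{sec:hyper}), and what Theorem~\ref{thm:radial} relies on. The direction in $TS$ transverse to $\sdistr$ is an \emph{angular} rotation field (for instance $U_4$ in Example~\ref{ex:surfaceinH2}, which satisfies $\quasi(U_4)=-\tfrac{1}{2}\sqrt{\sum_i x_i^2}$), and it is this pairing with the contact form that produces both the factor $\tfrac12$ and the $2n$-th power of $h_k$; the round $(2n-1)$-sphere of radius $h_k(r)$ only accounts for $h_k^{2n-1}\prod_{i=1}^{2n-2}(\sin\varphi_i)^{2n-i-1}$. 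Your bookkeeping --- all of $h_k^{2n}$ from the angular sphere, $\tfrac12$ from a Reeb--$\pt/\pt r$ relation --- is therefore off in both entries, and since you planned to carry out the frame computation in full only once (for $S^{2n+1}$) and transplant it, the misidentification would propagate to all three cases.

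For comparison, the paper sidesteps adapted frames entirely and computes in Cartesian coordinates of the ambient space: it writes $\vol$ explicitly ($\vol=-n!\bigwedge_{i=1}^{2n+1}\db x_i$ on $\Heis^n$, and $\frac{n!}{2k^{2n+2}}\sum_{i}(-1)^{i-1}x_i\bigwedge_{l\ne i}\db x_l$ in the curved models), gives $N$ in coordinates (Lemma~\ref{lem:NinH} and its analogues), and observes that restricting $\iota_N\vol$ to $\{x_{2n+1}=0\}$, respectively $\{x_{2n+2}=0\}$, annihilates all terms of the contraction except the one through the vertical coordinate (Lemma~\ref{lem:volinsphere}); this yields $\mu=\tfrac{n!}{2}\,(\text{radius})\times(\text{flat, spherical or hyperbolic volume density})$, and the spherical change of variables with $\det J_{2n}=r^{2n-1}\prod_{i=1}^{2n-2}(\sin\varphi_i)^{2n-i-1}$, together with its sphere and hyperboloid counterparts, gives the statement. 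If you wish to salvage the frame approach, replace $X_0$ by its tangential projection, take as frame of $T(S\setminus\car(S))$ the field $\pt/\pt r$, the $2n-2$ horizontal angular fields, and the one transverse angular field, and track carefully on which vector the factor $\omega$ of $\vol=\omega\wedge(\db\omega)^n$ is evaluated.
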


We observe that, except for a leading constant, the volume
forms induced on the Euclidean space $\R^{2n}$, the sphere $S^{2n}$ and
the hyperboloid $\Sinads^{2n}$
differ from the standard Riemannian volume forms by a factor of
$h_k$. This additional factor is the main reason why the radial
part of the stochastic process with generator $\frac{1}{2}\Delta$ is
of one order higher than in the model spaces for Riemannian manifolds
of the same topological dimension.
For a discussion on the radial process of Brownian motion on the model
Riemannian manifolds, see
e.g.\ Grigor'yan~\cite[Section~3.10]{grigoryan} and
Hsu~\cite[Section~3.3]{hsu}. The radial part of sub-Riemannian
Brownian motion in the setting of totally geodesic foliations is
studied in~\cite{BGKNT}.
\begin{theorem}\label{thm:radial}
  Let $(M,\distr,g)$ be a $(2n+1)$-dimensional
  contact sub-Riemannian model space. For the model hypersurface $S$ in
  the model space $(M,\distr,g)$, the radial part of the
  stochastic process with generator $\frac{1}{2}\Delta$
  on $S\setminus\car(S)$ is 
  \begin{itemize}
  \item the Bessel process of order $2n+1$ if $M=\R^{2n+1}$,
  \item a Legendre process of order $2n+1$ if $M=S^{2n+1}$,
  \item a hyperbolic Bessel process of order $2n+1$ if $M=\ads^{2n+1}$.
  \end{itemize}
\end{theorem}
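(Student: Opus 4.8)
The plan is to compute the radial generator explicitly from the volume form $\mu$ in Proposition~\ref{propn:vol} and then identify it with the generator of the relevant one-dimensional diffusion. By the definition of the intrinsic sub-Laplacian $\Delta=\operatorname{div}_\mu(\nabla_S)$, for a radial test function $f(r)$ we need only understand how $\nabla_S$ and the $\mu$-divergence act in the coordinates $(r,\varphi_1,\dots,\varphi_{2n-1})$. First I would determine the horizontal gradient of a radial function: since $r$ is a natural radial coordinate, $\nabla_S f = (Y f)\,Y$ up to the coefficient coming from $g(\partial_r,\partial_r)$ along the horizontal distribution, so I expect $\nabla_S f = \psi(r)\, f'(r)\,\partial_r$ for some explicit weight $\psi$ determined by the sub-Riemannian metric restricted to $\sdistr$ in these coordinates. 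The key point is that $r$ should be (a reparametrisation of) the sub-Riemannian distance from the characteristic point along $S\setminus\car(S)$, so that the horizontal length of $\partial_r$ is controlled and $\psi$ is constant or a simple explicit function.

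Next I would apply the divergence formula. Writing $\mu = \rho(r,\varphi)\,\db r\wedge\db\varphi_1\wedge\dots\wedge\db\varphi_{2n-1}$ with $\rho(r,\varphi)=\frac{n!}{2}(h_k(r))^{2n}\prod_{i=1}^{2n-2}(\sin\varphi_i)^{2n-i-1}$ from Proposition~\ref{propn:vol}, the divergence of a vector field $V=\sum_j V^j\partial_j$ with respect to $\mu$ is $\operatorname{div}_\mu V=\frac{1}{\rho}\sum_j\partial_j(\rho\, V^j)$. Applied to the purely radial field $\nabla_S f=\psi(r)f'(r)\,\partial_r$, only the $\partial_r$ term survives and the angular factor $\prod(\sin\varphi_i)^{2n-i-1}$ cancels between $\rho$ and $1/\rho$, leaving
\begin{equation*}
  \Delta f = \frac{1}{(h_k(r))^{2n}}\,\frac{\db}{\db r}\!\left((h_k(r))^{2n}\,\psi(r)\,f'(r)\right)\;.
\end{equation*}
Expanding this yields a second-order operator of the form $a(r)f''(r)+b(r)f'(r)$ with $b(r)$ governed by the logarithmic derivative $2n\,h_k'(r)/h_k(r)$ of the weight.

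I would then match $\frac{1}{2}\Delta$ against the known generators. For $M=\R^{2n+1}$ we have $h_k(r)=r$, so $h_k'/h_k=1/r$ and the drift term is $\frac{1}{2}\cdot\frac{2n}{r}f'=\frac{2n}{2r}f'$; together with the diffusion coefficient this is precisely the generator $\frac{1}{2}f''+\frac{(2n+1)-1}{2r}f'$ of the Bessel process of order $2n+1$, confirming the first case. For $M=S^{2n+1}$ one has $h_k(r)=k^{-1}\sin(kr)$, giving drift proportional to $k\cot(kr)$, which is the defining drift of a Legendre (Jacobi-type) process on the interval $I=(0,\pi/k)$; for $M=\ads^{2n+1}$ one has $h_k(r)=k^{-1}\sinh(kr)$, producing drift proportional to $k\coth(kr)$, matching the hyperbolic Bessel process. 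In each case the order $2n+1$ comes from the exponent $2n$ on $h_k$ contributing $2n$ to the drift together with the $+1$ from the radial diffusion, mirroring the Riemannian formula but one order higher exactly because $\mu$ carries the extra factor $h_k$ noted after Proposition~\ref{propn:vol}.

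The main obstacle I anticipate is establishing rigorously that $r$ is the correct radial variable and computing the weight $\psi(r)$ in the horizontal gradient, that is, verifying that $g(\partial_r,\partial_r)$ restricted appropriately is constant (or the precise explicit function) so that the diffusion coefficient $a(r)$ is exactly $\frac{1}{2}$ after the intended time or space normalisation. This requires carefully reconciling the coordinate $r$ appearing in Proposition~\ref{propn:vol} with the sub-Riemannian metric on $\sdistr$, and checking that $\partial_r$ genuinely lies in (or projects onto) the horizontal distribution $\sdistr$ rather than having a component that must be discarded; once the coefficient $\psi$ is pinned down, the identification with the named processes is a direct comparison of one-dimensional generators and should be routine.
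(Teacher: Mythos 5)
Your proposal is correct and takes essentially the same route as the paper: in each model case the paper verifies exactly the point you flag as the main obstacle, namely that $R=\frac{\pt}{\pt r}$ spans $\left.\ker\db\quasi\right|_{\sdistr}$ (hence is horizontal, indeed the characteristic foliation direction) and satisfies $g(\frac{\pt}{\pt r},\frac{\pt}{\pt r})=1$, so your weight $\psi\equiv 1$. It then computes $\operatorname{div}_\mu(R)$ from Proposition~\ref{propn:vol}, obtaining the drifts $\frac{n}{r}$, $nk\cot(kr)$ and $nk\coth(kr)$ for $\frac{1}{2}\Delta$ and identifying the Bessel, Legendre and hyperbolic Bessel processes of order $2n+1$, precisely as in your matching step.
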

Since a Bessel process of order $2n+1$ and a hyperbolic Bessel process
of order $2n+1$ for $n\geq 1$ almost surely neither hits the origin
nor explodes in finite time, and as a Legendre process of order $2n+1$
for $n\geq 1$ almost surely hits neither endpoint of the interval
$(0,\frac{\pi}{k})$, it is an immediate consequence of
Theorem~\ref{thm:radial} that in all model cases considered the
intrinsic sub-Laplacian $\Delta$ defined on $S\setminus\car(S)$ is
stochastically complete. On the
other hand, the geometry
induced on the hypersurface $S\setminus\car(S)$ is not geodesically
complete. This can
be seen by noting that a radial
ray, that is, a path along the radial direction emanating from
one of the
characteristic points, parameterised by arc
length is a geodesic which cannot be extended indefinitely towards the
characteristic point.

\subsection*{Organisation of the article.} In Section~\ref{sec:hyp},
we first provide an overview of contact sub-Riemannian manifolds and of
quasi-contact sub-Riemannian manifolds before showing that, for $n\geq
2$, a contact structure on a manifold $M$ of dimension $2n+1$ induces
a quasi-contact structure on a
hypersurface embedded in $M$ away from the set of characteristic
points. We illustrate this phenomenon by considering a canonical
hypersurface in the Heisenberg group $\Heis^2$.
In Section~\ref{sec:lap}, we describe the Laplace--Beltrami operators
$\Delta^\eps$ obtained by means of Riemannian approximations in a convenient
way which allows us to subsequently prove
Theorem~\ref{thm:limitlap}. We proceed by explicitly determining the
intrinsic sub-Laplacian for the considered hypersurface in
$\Heis^2$. In Section~\ref{sec:model}, we analyse model cases for
our setting, which results in proofs of Proposition~\ref{propn:vol}
and Theorem~\ref{thm:radial}.

\subsection*{Acknowledgements}Davide Barilari acknowledges support by the STARS Consolidator Grants 2021 ``NewSRG'' of the University of Padova.

\section{Hypersurfaces in contact sub-Riemannian manifolds}

\label{sec:hyp}
We start by providing a concise overview of contact sub-Riemannian manifolds
and of
quasi-contact sub-Riemannian manifolds. For more exhaustive
discussions, see e.g.~\cite{ABB19}, Boscain, Neel and
Rizzi~\cite[Section~10]{BNR},
and Charlot~\cite{charlot2002}. For an in-depth account on
contact geometry, one may consult Blair~\cite{blair} and
Geiges~\cite{geiges}. We then link contact sub-Riemannian
manifolds and quasi-contact
sub-Riemannian manifolds by showing that for a hypersurface $S$
in a manifold $M$ of dimension bigger than three, a contact structure on
$M$ induces a
quasi-contact structure on the hypersurface $S$ away from the set
$\car(S)$ of characteristic points.

A \emph{contact sub-Riemannian manifold} is a triple $(M,\distr, g)$
consisting of a smooth manifold $M$ with $\dim M=2n+1$ for $n\geq 1$,
a contact structure $\distr$ on $M$
and a
smooth fibre inner product $g$ defined on $\distr$.
The distribution $\distr$ is called a contact structure on $M$ if it
is locally defined as the kernel $\distr=\ker \omega$ of a one-form
$\omega$ on $M$ which satisfies the non-degeneracy condition
$\omega\wedge (\db\omega)^{n}\neq 0$. The latter is equivalent to
requiring that $\db\omega|_{\distr}$ is non-degenerate and implies
that the contact structure $\distr$ is a corank one distribution in the
tangent bundle $TM$. Recall we assume throughout that there
exists a global one-form $\omega$ defining the contact structure
$\distr$, which also induces an orientation on $M$ through the volume form
$\omega\wedge (\db\omega)^{n}$.

Moreover, we observe that for a smooth and positive function $f\colon M\to
(0,\infty)$, we have
\begin{displaymath}
  (f\omega)\wedge(\db(f\omega))^n=f^{n+1}\,\omega\wedge(\db\omega)^n
\end{displaymath}
as well as $\ker f\omega=\ker\omega$. Thus, the one-forms $\omega$ and
$f\omega$ define the same contact structure $\distr$ on $M$ and the
associated sub-Riemannian structures are equivalent. Due to
$\distr=\ker \omega$, we further obtain that
\begin{displaymath}
  \db(f\omega)|_{\distr}=f\dd\omega|_{\distr}\;.
\end{displaymath}
Hence, we can and do assume that the contact form $\omega$
satisfies the 
normalisation condition~(\ref{eq:normalise}), that is,
\begin{displaymath}
  \left.(\db\omega)^{n}\right|_{\distr}=n!\,\mathrm{vol}_{g}\;.
\end{displaymath}
The Reeb vector field $X_0$ on $M$ with respect to the one-form
$\omega$ normalised according to~(\ref{eq:normalise}) is uniquely
characterised by requiring that $\omega(X_{0})=1$ and
$\db\omega(X_{0},\cdot)=0$.

\smallskip

A \emph{quasi-contact sub-Riemannian manifold} is a triple
$(S,\sdistr, g)$ which consists of a smooth even-dimensional manifold $S$ 
where $\dim S=2n$ for $n\geq 2$,
a quasi-contact structure $\sdistr$ on $S$
and a smooth fibre inner product $g$
defined on $\sdistr$. A distribution
$\sdistr$ is called a quasi-contact structure on $S$ if it has corank
one in the tangent bundle $TS$ and is locally given
as $\sdistr=\ker\quasi$ for a one-form $\quasi$ on $S$ satisfying the 
non-degeneracy condition that $\db\quasi|_{\sdistr}$ has
one-dimensional kernel.
Note that since the manifold $S$ is of even dimension, the
distribution $\sdistr$
has odd rank and $\db\quasi|_{\sdistr}$
necessarily possesses a non-trivial kernel. Therefore, the above
non-degeneracy condition can be
understood as a minimal degeneracy assumption. If there exists a
global one-form $\quasi$ defining the quasi-contact structure
$\sdistr$, we call this one-form $\quasi$ a quasi-contact form.

The following property for quasi-contact structures is well-known but
we include its proof for completeness as it implies that the triple
$(S,\sdistr,g)$ introduced above is indeed a sub-Riemannian manifold.

\begin{lemma}\label{lem:bracketgen}
  A quasi-contact structure $\sdistr$ on a manifold $S$ is a bracket
  generating distribution on $S$.
\end{lemma}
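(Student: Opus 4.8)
The plan is to prove the slightly stronger statement that $\sdistr$ is bracket generating already at step two, namely that $\sdistr+[\sdistr,\sdistr]=TS$. Since bracket generation is a pointwise condition and $\sdistr$ is by definition locally the kernel of a quasi-contact form, I would fix a point $p\in S$ and work on a neighbourhood of $p$ with a one-form $\quasi$ for which $\sdistr=\ker\quasi$ there. The whole argument then reduces to producing, near $p$, two sections of $\sdistr$ whose Lie bracket escapes $\sdistr$.

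The crucial tool is the intrinsic formula for the exterior derivative. For any two local sections $X,Y$ of $\sdistr$ one has $\quasi(X)=\quasi(Y)=0$, so
\[
  \db\quasi(X,Y)=X(\quasi(Y))-Y(\quasi(X))-\quasi([X,Y])=-\quasi([X,Y])\;.
\]
Because $\sdistr$ has corank one and $\quasi$ is a defining form, a tangent vector lies in $\sdistr$ exactly when $\quasi$ annihilates it. Hence $[X,Y]$ fails to be a section of $\sdistr$ precisely when $\db\quasi(X,Y)\neq 0$, and producing a transverse bracket is therefore equivalent to showing that $\db\quasi|_\sdistr$ does not vanish identically at $p$.

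Next I would invoke the minimal degeneracy hypothesis. The restriction $\db\quasi|_\sdistr$ is a skew-symmetric bilinear form on the fibres of $\sdistr$, which have odd dimension $2n-1$. A skew form on an odd-dimensional space always has a non-trivial kernel, and the quasi-contact condition requires this kernel to be exactly one-dimensional; consequently the rank of $\db\quasi|_\sdistr$ equals $2n-2$, which is strictly positive since $n\geq 2$. In particular $\db\quasi|_\sdistr\neq 0$ at $p$, so there exist $u,v\in\sdistr_p$ with $\db\quasi(u,v)\neq 0$. Extending $u,v$ to local sections $X,Y$ of $\sdistr$ and using that $\db\quasi$ is tensorial, the displayed identity gives $\quasi([X,Y])(p)=-\db\quasi_p(u,v)\neq 0$, so $[X,Y]_p$ is transverse to $\sdistr_p$.

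Finally, since $\sdistr$ has corank one, the subspace $\sdistr_p$ together with the line spanned by $[X,Y]_p$ already exhausts $T_pS$, whence $\sdistr+[\sdistr,\sdistr]=TS$ at $p$. As $p$ was arbitrary, $\sdistr$ is bracket generating. I do not anticipate a genuine obstacle: once the exterior derivative formula is in place the argument is essentially linear algebra, and the only place where the standing assumption $n\geq 2$ is used is the positivity of the rank $2n-2$, which is exactly what guarantees that $\quasi$ is not closed along $\sdistr$.
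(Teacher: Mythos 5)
Your proposal is correct and follows essentially the same route as the paper: both arguments pick sections $Y_1,Y_2$ of $\sdistr$ with $\db\quasi(Y_1,Y_2)\neq 0$ (guaranteed by the one-dimensional kernel of $\db\quasi|_{\sdistr}$ on the odd-rank fibres, using $n\geq 2$) and deduce $\quasi([Y_1,Y_2])\neq 0$, so the bracket leaves the corank-one distribution; your intrinsic formula for the exterior derivative is just the paper's Leibniz--Cartan computation in closed form. The pointwise-plus-tensoriality packaging and the explicit rank count $2n-2$ are cosmetic refinements, not a different method.
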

Observe that we define
quasi-contact structures only in dimension $2n$ for $n\geq 2$,
which is an important condition here because a rank one distribution, that is,
a line field, is always integrable.
\begin{proof}[Proof of Lemma~\ref{lem:bracketgen}]
  Let $\quasi$ be a one-form locally defining the distribution
  $\sdistr$ through $\sdistr=\ker\quasi$. Since the kernel
  $\left.\ker\db\quasi\right|_\sdistr$
  has dimension one and as $\sdistr$ is of rank at least three, we can
  locally choose
  two vector fields $Y_1$ and $Y_2$ in $\sdistr$ such that
  $\db\quasi(Y_1,Y_2)$ is non-zero. Applying the Leibniz rule and the Cartan
  identity, we further obtain
  \begin{equation}\label{eq:provesH}
    \mathcal{L}_{Y_1}\left(\quasi(Y_2)\right)
    =\left(\mathcal{L}_{Y_1}\quasi\right)(Y_2)+\quasi\left([Y_1,Y_2]\right)
    =\db\quasi(Y_1,Y_2)
    +\iota_{Y_2}\dd\left(\quasi(Y_1)\right)+\quasi\left([Y_1,Y_2]\right)\;.
  \end{equation}
  For the vector fields $Y_1$ and $Y_2$, we have
  $\quasi(Y_1)=\quasi(Y_2)=0$ with $\db\quasi(Y_1,Y_2)$ being non-zero.
  The above
  identity then implies that $\quasi\left([Y_1,Y_2]\right)$ is non-zero
  because~(\ref{eq:provesH}) simplifies to
  \begin{displaymath}
    0=\db\quasi(Y_1,Y_2)+\quasi\left([Y_1,Y_2]\right)\;.
  \end{displaymath}
  It follows that the Lie bracket $[Y_1,Y_2]$ is not a vector field in
  $\sdistr$.
  As a quasi-contact structure is a
  distribution of rank 
  $2n-1$ on a manifold of dimension $2n$ for some $n\geq 2$,
  this concludes the proof.
\end{proof}

We now take a $(2n+1)$-dimensional contact sub-Riemannian manifold
$(M,\distr,g)$ for $n\geq 1$ with contact form $\omega$ satisfying the
normalisation condition~(\ref{eq:normalise}), and we consider an
orientable hypersurface $S$ embedded in $M$, where $\dim S=2n$. Recall
that the set $\car(S)$ of characteristic points of $S$ is given by
\begin{displaymath}
  \car(S)=\left\{x\in S:T_{x}S=\distr_{x}\right\}\;.
\end{displaymath}
The distribution $\sdistr$ defined on the hypersurface
$S\setminus\car(S)$ as
$\sdistr=\distr\cap TS$ has,
by construction, corank one in the tangent bundle of
$S\setminus\car(S)$.
As previously remarked, in the case $n=1$ studied
in~\cites{BBCH,BBC} the distribution $\sdistr$ is a line field, which
is always integrable. In contrast to this, the following result states
that, for $n >1$, the rank $2n-1$ distribution $\sdistr$ is a quasi-contact
structure on the $2n$-dimensional hypersurface $S\setminus\car(S)$.
Together with Lemma~\ref{lem:bracketgen}, this shows that the
distribution $\sdistr$ is bracket generating for $n>1$.
\begin{lemma}\label{lem:QConS}
  For $n\geq 2$, the distribution $\sdistr$ defined on
  $S\setminus\car(S)$ as $\sdistr=\distr\cap TS$
  is a quasi-contact structure
  on $S\setminus\car(S)$.
\end{lemma}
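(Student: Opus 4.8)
The plan is to reduce the statement to a pointwise fact in symplectic linear algebra. Since $\quasi$ is the restriction $i^\ast\omega$ of the contact form to $S\setminus\car(S)$, we have $\db\quasi=i^\ast\db\omega$, and for tangent vectors lying in $\sdistr=\distr\cap TS$ the two-form $\db\quasi|_\sdistr$ agrees with the restriction $\db\omega|_\sdistr$. Thus it suffices to show that, at every point $x\in S\setminus\car(S)$, the restriction of $\db\omega$ to the subspace $\sdistr_x\subset T_xM$ has a kernel of dimension exactly one. First I would record that the contact condition $\omega\wedge(\db\omega)^n\neq 0$ is equivalent to $\db\omega|_{\distr_x}$ being non-degenerate, so that $(\distr_x,\db\omega|_{\distr_x})$ is a $2n$-dimensional symplectic vector space.

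Next I would locate $\sdistr_x$ inside this symplectic space. Since $x$ is non-characteristic, $T_xS\neq\distr_x$; as both are $2n$-dimensional hyperplanes in the $(2n+1)$-dimensional space $T_xM$, their intersection $\sdistr_x=\distr_x\cap T_xS$ has dimension $2n-1$, that is, $\sdistr_x$ is a hyperplane in the symplectic space $\distr_x$. The key step is then the linear-algebra claim that the restriction of a symplectic form $\sigma$ on $V$ to a codimension-one subspace $W$ has one-dimensional kernel. I would prove this by computing the radical $\ker(\sigma|_W)=W\cap W^{\perp_\sigma}$, where $W^{\perp_\sigma}$ denotes the symplectic orthogonal. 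Non-degeneracy gives $\dim W^{\perp_\sigma}=\dim V-\dim W=1$ together with $(W^{\perp_\sigma})^{\perp_\sigma}=W$; since every line is isotropic for the alternating form $\sigma$, the line $W^{\perp_\sigma}$ is contained in its own orthogonal, hence in $W$. Therefore $W\cap W^{\perp_\sigma}=W^{\perp_\sigma}$ is exactly one-dimensional.

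Applying this with $V=\distr_x$, $\sigma=\db\omega|_{\distr_x}$ and $W=\sdistr_x$ shows that $\db\quasi|_{\sdistr_x}$ has a one-dimensional kernel at every $x\in S\setminus\car(S)$. Since $\quasi=i^\ast\omega$ is a globally defined smooth one-form with $\sdistr=\ker\quasi$ on $S\setminus\car(S)$, this is precisely the non-degeneracy condition in the definition of a quasi-contact structure, which completes the argument.

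I expect the only real subtlety to be pinning the kernel dimension to exactly one rather than merely to a nonzero value. The lower bound is automatic from parity, since an alternating form on the odd-dimensional space $\sdistr_x$ is necessarily degenerate, whereas the upper bound is what forces $\sdistr_x$ to be of codimension one in $\distr_x$, and this is exactly where the non-characteristic hypothesis $T_xS\neq\distr_x$ enters. Everything else is a pointwise, frame-independent computation, so no additional care with smoothness or with the global one-form is required beyond noting that $\quasi=i^\ast\omega$ is smooth.
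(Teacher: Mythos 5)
Your proof is correct and follows essentially the same route as the paper's: both reduce the claim to the pointwise linear-algebra fact that the symplectic orthogonal $\sdistr_x^{\perp}$ of the hyperplane $\sdistr_x\subset\distr_x$ is one-dimensional and contained in $\sdistr_x$, whence it equals $\ker\db\quasi_x|_{\sdistr_x}$. The only cosmetic difference is the last step, where you invoke isotropy of lines together with $(W^{\perp_\sigma})^{\perp_\sigma}=W$, while the paper argues by contradiction that a generator $\xi\in\sdistr_x^{\perp}$ not lying in $\sdistr_x$ would be in the kernel of $\db\omega_x$ on all of $\distr_x$ — the same underlying observation that $\db\omega_x(\xi,\xi)=0$.
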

\begin{proof}
  For any $x\in S\setminus\car(S)$, we need to show that
  $\left.\db\quasi_x\right|_{\sdistr_x}\colon
    \sdistr_x\times\sdistr_x\to\R$, that is,
  \begin{displaymath}
    \left.\db\omega_x\right|_{\sdistr_x}\colon
    \sdistr_x\times\sdistr_x\to\R
  \end{displaymath}
  has one-dimensional kernel, which is a consequence of the following
  linear algebra observation.
  
  We recall the skew-symmetric bilinear form
  $\db\omega_x\colon\distr_x\times\distr_x\to\R$ is
  non-degenerate by
  assumption. This means that if $w\in\distr_x$ satisfies
  \begin{displaymath}
    \db\omega_x(v,w)=0
    \text{ for all }v\in\distr_x
  \end{displaymath}
  then $w=0$. Moreover, since $x\in S\setminus\car(S)$, we know that
  $\sdistr_x\subset\distr_x$ is a subspace of codimension one.
  The non-degeneracy of $\db\omega_x$ then implies that the
  orthogonal complement $\sdistr_x^\perp$ of $\sdistr_x$ defined with respect
  to the bilinear form $\db\omega_x$, that is,
  \begin{displaymath}
    \sdistr_x^\perp=\left\{w\in\distr_x:
    \db\omega_x(v,w)=0\text{ for all }v\in\sdistr_x\right\}
  \end{displaymath}
  has
  \begin{equation}\label{eq:dimofker}
    \dim\sdistr_x^{\perp}=\dim\distr_x-\dim\sdistr_x=1\;.
  \end{equation}
  Let $\xi\in\sdistr_x^{\perp}$ be non-zero.
  If $\xi\not\in\sdistr_x$, we would have
  $\distr_x=\sdistr_x\oplus\sdistr_x^\perp$ and $\xi$ would lie in the
  kernel of $\db\omega_x$. As this contradicts the non-degeneracy of
  $\db\omega_x$, it follows that $\xi\in\sdistr_x$. Therefore, we
  obtain
  \begin{displaymath}
    \sdistr_x^\perp=\ker\left.\db\quasi_x\right|_{\sdistr_x}\;,
  \end{displaymath}
  and the desired result follows from~(\ref{eq:dimofker}).
\end{proof}

As a direct consequence of Lemma~\ref{lem:QConS}, we recover
the result~\cite[Theorem~1.1]{TY} concerning the horizontal
connectivity of points on a hypersurface embedded in a
contact sub-Riemannian manifold of dimension $2n+1$ for $n>1$.

Moreover, we see that $\db\quasi|_{\sdistr}$ induces a line field on
$S\setminus\car(S)$ which can be oriented and extended
to the set $\car(S)$ of characteristic points to yield an
oriented singular line field on the hypersurface $S$, called the
characteristic foliation of $S$ and defined in~\cite[Definition~2.0.1]{CHT}.

\begin{remark}
  Throughout the article, we consider the distribution $\sdistr$ as
  defined on $S\setminus\car(S)$, that is, away from the set $\car(S)$
  of points $x\in S$ where $T_xS=\distr_x$. One may also regard
  $\sdistr$ as a generalised distribution given at every point of $S$
  by setting $\sdistr_{x}=T_{x}S$ for $x\in \car(S)$.

  In this viewpoint, $\sdistr$ is not a rank-varying distribution in the
  sense of vector fields because there does not exist a family of globally
  defined vector fields $Y_1,\dots,Y_m$ on $S$ such that, for all $x\in S$,
  \begin{displaymath}
    \sdistr_x=\operatorname{span}
    \left\{Y_1(x),\dots,Y_m(x)\right\}\;.
  \end{displaymath}
  Indeed, in such a case the map $x\mapsto\dim\sdistr_x$ would be lower
  semicontinuous, which is not true in our situation. Instead, the
  dimension of $\sdistr_x$ increases at singular points. This is
  typical of a distribution defined by Pfaffian equations, that is,
  the zero locus of a family of linear forms.
\end{remark}

Examples illustrating the geometry and in particular the singular
one-dimensional foliation induced on surfaces embedded in
three-dimensional contact sub-Riemannian manifolds are discussed,
among others, in~\cites{massot,BBCH,BBC}. For an
example which demonstrates the
geometry induced on hypersurfaces
in higher-dimensional contact sub-Riemannian manifolds and which
further
highlights that for $n>1$ the distribution $\sdistr$ defined away from
characteristic points becomes quasi-contact, we study a
canonical hypersurface embedded in the Heisenberg group $\Heis^2$.
\begin{example}\label{ex:surfaceinH2}
  Let $(x_1,y_1,x_2,y_2,z)$ denote Cartesian coordinates on $\R^5$
  and consider the contact form $\omega$ on $\R^5$ defined by
  \begin{equation}\label{eq:omegainH2}
    \omega=\frac{1}{2}\left(x_1\dd y_1-y_1\dd x_1\right)
    +\frac{1}{2}\left(x_2\dd y_2-y_2\dd x_2\right)-\db z\;.
  \end{equation}
  Equipping the contact structure $\distr=\ker\omega$ with the fibre
  inner product
  \begin{equation}\label{eq:H2g}
    g=\db x_1\otimes\db x_1+\db y_1\otimes\db y_1
    +\db x_2\otimes\db x_2+\db y_2\otimes\db y_2\;,
  \end{equation}
  we obtain the contact sub-Riemannian manifold $(\R^5,\distr,g)$,
  which is the Heisenberg group $\Heis^2$. We observe that our choice
  of contact form $\omega$ satisfies the normalisation
  condition~(\ref{eq:normalise}) since
  \begin{displaymath}
    \db\omega=\db x_1\wedge\db y_1
      +\db x_2\wedge\db y_2\;,
  \end{displaymath}
  which implies that
  \begin{displaymath}
    (\db\omega)^2=\db\omega\wedge\db\omega
    =2\dd x_1\wedge\db y_1\wedge\db x_2\wedge\db y_2\;,
  \end{displaymath}
  and therefore, we have
  $\left.(\db\omega)^{2}\right|_{\distr}=2\,\mathrm{vol}_{g}\,$.

  The hypersurface $S$ which we study in the Heisenberg group
  $\Heis^2$ is the one defined by $\{z=0\}$.
  It illustrates well the changes in properties of the
  distribution $\sdistr$
  for $n > 1$ compared to $n=1$ whilst
  still allowing for explicit computations and constructions.
  From
  \begin{displaymath}
    \omega\left(\frac{\pt}{\pt x_1}\right)=-\frac{y_1}{2}\;,\quad
    \omega\left(\frac{\pt}{\pt y_1}\right)=\frac{x_1}{2}\;,\quad
    \omega\left(\frac{\pt}{\pt x_2}\right)=-\frac{y_2}{2}\;,\quad
    \omega\left(\frac{\pt}{\pt y_2}\right)=\frac{x_2}{2}\;,
  \end{displaymath}
  we see that the origin of $\R^5$ is the only characteristic point of
  this hypersurface $S$.
  The distribution $\sdistr$ defined on $S\setminus\{0\}$ as
  $\distr\cap TS$ is a subbundle of corank one in the tangent
  bundle of $S\setminus\{0\}$ and can be described as the kernel of the
  one-form
  \begin{equation}\label{eq:H2quasi}
    \quasi=\frac{1}{2}\left(x_1\dd y_1-y_1\dd x_1\right)
    +\frac{1}{2}\left(x_2\dd y_2-y_2\dd x_2\right)\;,
  \end{equation}
  which is
  obtained by restricting the contact form $\omega$ to the tangent
  bundle of $S\setminus\{0\}$.
  
  To gain a better understanding of the distribution $\sdistr$,
  we find an orthonormal frame for $\sdistr$, which we later
  further work with to explicitly determine the intrinsic sub-Laplacian
  $\Delta$ on $S\setminus\{0\}$. Let $U_1,U_2,U_3,U_4$ be the vector
  fields on $S\setminus\{0\}$ defined by
  \begin{align*}
    U_1
    &=\frac{1}{\sqrt{x_1^2+y_1^2+x_2^2+y_2^2}}
      \left(x_1\frac{\pt}{\pt x_1}+y_1\frac{\pt}{\pt y_1}
      +x_2\frac{\pt}{\pt x_2}+y_2\frac{\pt}{\pt y_2}\right)\;,\\
    U_2
    &=\frac{1}{\sqrt{x_1^2+y_1^2+x_2^2+y_2^2}}
      \left(y_2\frac{\pt}{\pt x_1}+x_2\frac{\pt}{\pt y_1}
      -y_1\frac{\pt}{\pt x_2}-x_1\frac{\pt}{\pt y_2}\right)\;,\\
    U_3
    &=\frac{1}{\sqrt{x_1^2+y_1^2+x_2^2+y_2^2}}
      \left(x_2\frac{\pt}{\pt x_1}-y_2\frac{\pt}{\pt y_1}
      -x_1\frac{\pt}{\pt x_2}+y_1\frac{\pt}{\pt y_2}\right)\;,\\
    U_4
    &=\frac{1}{\sqrt{x_1^2+y_1^2+x_2^2+y_2^2}}
      \left(y_1\frac{\pt}{\pt x_1}-x_1\frac{\pt}{\pt y_1}
      +y_2\frac{\pt}{\pt x_2}-x_2\frac{\pt}{\pt y_2}\right)\;.
  \end{align*}
  Using~(\ref{eq:H2g}) and~(\ref{eq:H2quasi}), we verify that
  $(U_1,U_2,U_3)$ is an orthonormal frame for $\sdistr$, and we further
  note that
  $(U_1,U_2,U_3,U_4)$ is a frame for the tangent bundle
  of $S\setminus\{0\}$. Due to
  \begin{displaymath}
    \db\quasi=\db x_1\wedge\db y_1
      +\db x_2\wedge \db y_2\;,
  \end{displaymath}
  we obtain that
  \begin{displaymath}
    \db\quasi\left(U_1,U_2\right)=\db\quasi\left(U_1,U_3\right)=0
    \quad\text{and}\quad
    \db\quasi\left(U_2,U_3\right)
    =-1\;.
  \end{displaymath}
  It follows that
  \begin{equation}\label{eq:charfolinH2}
    \left.\ker\db\quasi\right|_\sdistr=\operatorname{span}\left\{U_{1}\right\}
    =\operatorname{span}\left\{
      x_1\frac{\pt}{\pt x_1}+y_1\frac{\pt}{\pt y_1}+
      x_2\frac{\pt}{\pt x_2}+y_2\frac{\pt}{\pt y_2}
    \right\}\;,
  \end{equation}
  and thus, consistent with Lemma~\ref{lem:QConS}, the rank three
  distribution
  $\sdistr$ is a quasi-contact structure on
  the four-dimensional hypersurface
  $S\setminus\{0\}$. According to Lemma~\ref{lem:bracketgen}, this
  implies that the distribution $\sdistr$ is
  bracket generating on 
  $S\setminus\{0\}$, which can be seen directly by noting that
  \begin{align*}
    [U_2,U_3]
    &=\frac{2}{x_1^2+y_1^2+x_2^2+y_2^2}
      \left(-y_1\frac{\pt}{\pt x_1}+x_1\frac{\pt}{\pt y_1}
      -y_2\frac{\pt}{\pt x_2}+x_2\frac{\pt}{\pt y_2}\right)\\
    &=-\frac{2U_4}{\sqrt{x_1^2+y_1^2+x_2^2+y_2^2}}\;.
  \end{align*}
  We continue our analysis for this case by determining the intrinsic
  sub-Laplacian $\Delta$ in the forthcoming
  Example~\ref{ex:lapinH2}. Moreover, in Section~\ref{sec:heis}, we
  discuss the radial part of the stochastic process with generator
  $\frac{1}{2}\Delta$, which as a result of~(\ref{eq:charfolinH2}) is
  exactly the stochastic process induced on the characteristic
  foliation of the hypersurface $S$.
\end{example}

\section{Intrinsic sub-Laplacian as limit of
  Laplace--Beltrami operators}
\label{sec:lap}
After discussing
the construction of the Laplace--Beltrami operators $\Delta^\eps$ on
the hypersurface $S$ using Riemannian approximations of the
contact sub-Riemannian manifold $(M,\distr,g)$,
we proceed with proving Theorem~\ref{thm:limitlap}.

The Riemannian approximation for $\eps>0$ to the contact
sub-Riemannian manifold
$(M,\distr,g)$ with respect to the Reeb vector field $X_0$ equips the
smooth manifold $M$ with the Riemannian metric $\overline{g}^\eps$
given by, for $\theta_0\colon T M\to \R$ the unique linear map such
that $\theta_0(X_0)=1$ and $\theta_0(X)=0$ for any vector field $X$ in
$\distr$,
\begin{displaymath}
  \overline{g}^\eps=g\oplus
  \frac{1}{\eps^2}\left(\theta_0\otimes \theta_0\right)\;.
\end{displaymath}
In particular, if $(X_1,\dots,X_{2n})$ is a positively oriented local
orthonormal frame for the distribution $\distr$ with respect to the
fibre inner product
$g$, then $(X_1,\dots,X_{2n},\eps X_0)$ is a positively oriented
orthonormal frame for the tangent bundle
$TM$ with respect to the Riemannian metric $\overline{g}^\eps$. Using
this observation, we
can establish the property for the volume form
$\vol^{\eps}=\mathrm{vol}_{\overline g^{\eps}}$ on $M$ stated below.
\begin{lemma}\label{lem:vol}
  For $\eps>0$,
  the volume forms $\vol$ and $\vol^\eps$ on the manifold $M$ are
  related by
  \begin{displaymath}
    \eps n!\,\vol^{\eps}=\vol\;.
  \end{displaymath}
\end{lemma}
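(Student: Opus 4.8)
The plan is to verify the identity pointwise by evaluating both volume forms on a single conveniently chosen frame, exploiting that any two top-degree forms on the oriented $(2n+1)$-manifold $M$ are proportional. Since $\vol$ and $\vol^\eps$ are both nowhere-vanishing $(2n+1)$-forms, it suffices to compare their values on one positively oriented frame at each point and then read off the proportionality factor.

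First I would fix a point of $M$ and choose a positively oriented local orthonormal frame $(X_1,\dots,X_{2n})$ for $\distr$ with respect to $g$. The discussion preceding the statement already records that $(X_1,\dots,X_{2n},\eps X_0)$ is then a positively oriented $\overline{g}^\eps$-orthonormal frame for $TM$; consequently $\vol^\eps(X_1,\dots,X_{2n},\eps X_0)=1$, and by multilinearity of the top form this gives $\vol^\eps(X_1,\dots,X_{2n},X_0)=\eps^{-1}$.

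The key computation is evaluating $\vol=\omega\wedge(\db\omega)^n$ on the \emph{same} ordered frame $(X_1,\dots,X_{2n},X_0)$. Expanding the wedge product of the one-form $\omega$ against the $2n$-form $(\db\omega)^n$, every term in which $X_0$ is fed into $(\db\omega)^n$ vanishes because $\db\omega(X_0,\cdot)=0$, and every term in which some $X_i\in\distr$ is fed into $\omega$ vanishes because $\omega(X_i)=0$. The sole surviving term is the one pairing $X_0$ with $\omega$, which, using $\omega(X_0)=1$ and the accompanying sign $(-1)^{2n}=1$ arising from the position of $X_0$, reduces to $(\db\omega)^n(X_1,\dots,X_{2n})$. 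The normalisation condition~(\ref{eq:normalise}) together with $\mathrm{vol}_g(X_1,\dots,X_{2n})=1$ then yields $(\db\omega)^n(X_1,\dots,X_{2n})=n!$, so that $\vol(X_1,\dots,X_{2n},X_0)=n!$.

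Combining the two evaluations gives $\vol(X_1,\dots,X_{2n},X_0)=n!=\eps\,n!\cdot\eps^{-1}=\eps\,n!\,\vol^\eps(X_1,\dots,X_{2n},X_0)$ at the chosen point. Since the frame is positively oriented for both forms and the point was arbitrary, proportionality of top-degree forms upgrades this pointwise identity to the global equality $\eps\,n!\,\vol^\eps=\vol$. The only delicate point is the bookkeeping in the wedge-product expansion, namely identifying which single term survives and confirming its sign is $+1$, but this is routine once one observes that $\db\omega(X_0,\cdot)=0$ together with $\omega(X_i)=0$ collapses all but one term.
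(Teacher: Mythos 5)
Your proof is correct and follows essentially the same route as the paper: both evaluate $\vol$ and $\vol^\eps$ on the frame obtained from a positively oriented orthonormal frame $(X_1,\dots,X_{2n})$ for $\distr$ together with (a multiple of) $X_0$, using $\omega(X_i)=0$, $\omega(X_0)=1$, $\db\omega(X_0,\cdot)=0$ and the normalisation~(\ref{eq:normalise}) to extract the factor $\eps\,n!$. Your wedge-expansion bookkeeping and the upgrade from the pointwise comparison to the global identity via proportionality of top-degree forms are both sound, merely spelling out details the paper leaves implicit.
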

\begin{proof}
  Let $(X_1,\dots,X_{2n})$ be a positively oriented local
  orthonormal frame for $\distr$. Then
  $\omega(X_i)=0$ for all $i\in\{1,\dots,2n\}$ as well as
  $\omega(X_0)=1$
  and
  $\db\omega(X_0,\cdot)=0$ together with~(\ref{eq:defnOmega}) and the
  normalisation condition~(\ref{eq:normalise})
  yield
  \begin{displaymath}
    \vol(X_{1},\dots,X_{2n},\eps X_{0})
    =n!\,\omega(\eps X_0)
    \mathrm{vol}_g(X_{1},\ldots,X_{2n})=\eps n!\;.
  \end{displaymath}
  On the other hand, we have, by construction,
  \begin{displaymath}
    \vol^{\eps}(X_{1},\dots,X_{2n},\eps X_{0})=1\;,
  \end{displaymath}
  which implies the claimed result.
\end{proof}

Similarly to Definition~\ref{defn:sRnormal} for the sub-Riemannian
normal vector field
$N$ to $S\setminus\car(S)$ in the contact
sub-Riemannian manifold $(M,\distr,g)$, we define the Riemannian
normal vector field $N^\eps$ for $\eps>0$ to the hypersurface $S$
embedded in
the Riemannian manifold $(M,\overline{g}^\eps)$
of dimension $2n+1$.
\begin{definition}\label{defn:Rnormal}
  The Riemannian normal vector field $N^\eps$ along the hypersurface
  $S$ embedded
  in the Riemannian manifold $(M,\overline{g}^\eps)$ is the
  unit-length vector field along $S$, that is,
  \begin{displaymath}
    \overline{g}^\eps(N^\eps,N^\eps)=1\;,
  \end{displaymath}
  such that, for any vector field $Z$ on $S$,
  \begin{displaymath}
    \overline{g}^\eps(N^\eps,Z)=0\;,
  \end{displaymath}
  and, for any positively oriented local orthonormal frame
  $(Z_{1},\dots,Z_{2n})$ for $S\setminus\car(S)$, the frame
  $(N^\eps,Z_{1},\dots,Z_{2n})$ for $M$ is a positively oriented.
\end{definition}

The next result states that as $\eps\to 0$ the Riemannian
normal vector fields $N^\eps$ converge uniformly on compact subsets of
$S\setminus\car(S)$ to the sub-Riemannian normal vector field $N$.
\begin{lemma}\label{lem:normal}
  Uniformly on compact subsets of
  $S\setminus\car(S)$, we have
  \begin{displaymath}
    N^\eps\to N\quad\text{as}\quad\eps\to 0\;.
  \end{displaymath}
\end{lemma}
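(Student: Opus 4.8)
The plan is to reduce the problem to a two-dimensional linear algebra computation at each point $x\in S\setminus\car(S)$ and then check that the resulting expression for $N^\eps$ depends on $\eps$ and on bounded geometric data in a manner that is uniform on compact sets. First I would fix, locally around $x$, a $g$-orthonormal frame $(Y_1,\dots,Y_{2n-1})$ for $\sdistr$ and complete it to a $g$-orthonormal frame $(Y_1,\dots,Y_{2n-1},N)$ for $\distr$, using that $N$ is by Definition~\ref{defn:sRnormal} the unit vector in $\distr$ orthogonal to $\sdistr$. Adjoining the Reeb field gives a frame $(Y_1,\dots,Y_{2n-1},N,X_0)$ for $TM$, and I would record that, by~(\ref{eq:Rapprox}), the distribution $\distr$ and $\spn\{X_0\}$ are $\overline{g}^\eps$-orthogonal with $\overline{g}^\eps|_\distr=g$ and $\overline{g}^\eps(X_0,X_0)=\eps^{-2}$. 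Since each $Y_i$ lies in $\sdistr\subset T S$ and is $\overline{g}^\eps$-orthogonal to both $N$ and $X_0$, the condition $\overline{g}^\eps(N^\eps,Y_i)=0$ forces $N^\eps$ to have no component along $\sdistr$; hence $N^\eps\in\spn\{N,X_0\}$.

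Next I would pin down the one remaining tangent direction. Away from $\car(S)$ the hyperplanes $T_xS$ and $\distr_x$ are distinct, so $T_xS+\distr_x=T_xM$ and $N\notin T_xS$; consequently $\spn\{N,X_0\}\cap T_xS$ is one-dimensional and, since it is not spanned by a multiple of $N$, there is a unique smooth function $q$ on $S\setminus\car(S)$ for which $T=X_0-qN$ spans it. The relation $\omega(T)=1\neq 0$ confirms $T\notin\sdistr$, so $T$ is a genuine complement of $\sdistr$ in $TS$. Writing $N^\eps=aN+bX_0$ and imposing the two defining conditions from Definition~\ref{defn:Rnormal}, namely $\overline{g}^\eps(N^\eps,T)=0$ and $\overline{g}^\eps(N^\eps,N^\eps)=1$, reduces using the orthogonality relations above to $b=aq\eps^2$ and $a^2(1+q^2\eps^2)=1$. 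Selecting the root compatible with the orientation then yields the explicit formula
\begin{displaymath}
  N^\eps=\frac{1}{\sqrt{1+q^2\eps^2}}\left(N+q\eps^2 X_0\right)\;.
\end{displaymath}

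From this closed form the convergence $N^\eps\to N$ as $\eps\to 0$ is immediate, and it is uniform on any compact subset $K\subset S\setminus\car(S)$ because $q$ is continuous and hence bounded on $K$, so that $(1+q^2\eps^2)^{-1/2}\to 1$ and $q\eps^2(1+q^2\eps^2)^{-1/2}\to 0$ uniformly on $K$. I expect the main obstacle to be bookkeeping rather than depth: the two delicate points are verifying that the orientation compatibility in Definition~\ref{defn:Rnormal} selects the positive square root, so that the limit is $+N$ and not $-N$, which follows by a continuity argument comparing the positively oriented frames $(N^\eps,Z_1,\dots,Z_{2n})$ and $(N,Z_1,\dots,Z_{2n})$, and confirming that $q$ stays bounded on compacts. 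The latter is exactly where non-characteristic points matter: as $x$ approaches $\car(S)$ the plane $T_xS$ tends to $\distr_x$, the vector $N$ becomes tangent to $S$, and $q$ blows up, so the uniformity genuinely requires staying away from $\car(S)$.
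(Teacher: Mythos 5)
Your proof is correct, but it takes a genuinely different route from the paper. The paper works extrinsically: it writes $S$ locally as $\{u=0\}$ for a defining function $u\in C^\infty(M)$, fixes a $g$-orthonormal frame $(X_1,\dots,X_{2n})$ for $\distr$, and exhibits the explicit formulas $N=(-1)^\sigma\bigl(\sum_i(X_iu)X_i\bigr)/\bigl(\sum_i(X_iu)^2\bigr)^{1/2}$ and $N^\eps=(-1)^\sigma\bigl(\sum_i(X_iu)X_i+\eps^2(X_0u)X_0\bigr)/\bigl(\sum_i(X_iu)^2+\eps^2(X_0u)^2\bigr)^{1/2}$, from which uniform convergence on compacts is immediate since $\sum_i(X_iu)^2>0$ off $\car(S)$. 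You instead argue intrinsically: the $\overline{g}^\eps$-orthogonal complement of $\sdistr_x$ is $\spn\{N,X_0\}$ for every $\eps$, and solving the two defining conditions in that plane yields the closed form $N^\eps=(1+q^2\eps^2)^{-1/2}(N+q\eps^2X_0)$. Your function $q$ is exactly the paper's $X_0u/Nu$, and your vector $T=X_0-qN$ is precisely the projected Reeb field $Z$ that the paper introduces later in the proof of Theorem~\ref{thm:limitlap}; in that sense your argument anticipates and unifies the two computations, and it buys a coordinate-free formula with an explicit rate: the $X_0$-component is $O(\eps^2)$ uniformly on compacts. The paper's approach buys reusability of the frame expressions (the formula for $N$ is exactly what feeds Lemma~\ref{lem:surfvol}) at the cost of a choice of $u$. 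Two small points in your write-up deserve a line each: the inference $N\notin T_xS$ does not follow from $T_xS+\distr_x=T_xM$ alone, but from the observation that $N\in T_xS$ would force $N\in\distr_x\cap T_xS=\sdistr_x$, contradicting $g(N,N)=1$ together with $g(N,Y)=0$ for all $Y$ in $\sdistr$; and the orientation check is cleaner as a determinant computation than as a continuity argument, since writing $N^\eps=(a+bq)N+bT$ with $T$ tangent to $S$ shows the change of basis from $(N,Z_1,\dots,Z_{2n})$ to $(N^\eps,Z_1,\dots,Z_{2n})$ has determinant $a+bq=a(1+q^2\eps^2)$, so positivity of orientation forces $a>0$ directly. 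With those two repairs your argument is complete and fully rigorous.
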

\begin{proof}
  We use that the hypersurface $S$ is locally given
  as the zero set
  of some smooth function $u\in C^\infty(M)$ with $\db u\not=0$ on $S$,
  and we fix a local orthonormal frame $(X_1,\dots,X_{2n})$ for
  the contact structure $\distr$ with respect to the fibre inner
  product $g$.

  Since $x\in S$
  is a characteristic point of the hypersurface $S$ if
  the tangent space $T_x S$ coincides with $\distr_x$, that is, if
  \begin{displaymath}
    \left(X_iu\right)(x)=0
    \quad\text{for all}\enspace
    i\in\{1,2,\dots,2n\}\;,
  \end{displaymath}
  we have
  \begin{equation}\label{eq:char4char}
    \car(S)=
    \left\{x\in S:\sum_{i=1}^{2n}\left((X_i u)(x)\right)^2=0\right\}\;.
  \end{equation}

  In terms of the local orthonormal frame $(X_1,\dots,X_{2n})$ for
  the distribution $\distr$ and with $\sigma=0$ or $\sigma=1$
  depending on the orientation of $S$,
  the sub-Riemannian normal vector field $N$
  along $S\setminus\car(S)$ can be written as
  \begin{equation}\label{eq:Ninframe}
    N=(-1)^\sigma\dfrac{\sum_{i=1}^{2n}(X_i u)X_i}
    {\sqrt{\sum_{i=1}^{2n}(X_i u)^2}}\;,
  \end{equation}
  due to the following reasoning. The expression~(\ref{eq:Ninframe})
  is well-defined away from the set of characteristic points
  as a result of~(\ref{eq:char4char}). Moreover, the conditions in
  Definition~\ref{defn:sRnormal} are satisfied
  because of $(X_1,\dots,X_{2n})$ being
  an orthonormal frame for $\distr$ and since, for any vector
  field $Y$ in the distribution $\sdistr=\distr\cap TS$ on
  $S\setminus\car(S)$,
  \begin{displaymath}
    g(N,Y)=(-1)^\sigma \dfrac{\sum_{i=1}^{2n}(X_i u)g(X_i,Y)}
    {\sqrt{\sum_{i=1}^{2n}(X_i u)^2}}
    =(-1)^\sigma \dfrac{Yu}
    {\sqrt{\sum_{i=1}^{2n}(X_i u)^2}}=0\;.
  \end{displaymath}
  Similarly, we verify that the Riemannian normal
  vector field $N^\eps$ for $\eps>0$ to the hypersurface $S$
  can be expressed as
  \begin{equation}\label{eq:Nepsinframe}
    N^\eps=(-1)^\sigma \dfrac{\sum_{i=1}^{2n}(X_i u)X_i+\eps^2(X_0u)X_0}
    {\sqrt{\sum_{i=1}^{2n}(X_i u)^2+\eps^2(X_0u)^2}}\;.
  \end{equation}
  The claimed result then follows from~(\ref{eq:Ninframe}),
  (\ref{eq:Nepsinframe}) and $u\in C^\infty(M)$.
\end{proof}

The Riemannian volume form $\mu^\eps$ induced on the hypersurface $S$
embedded in the Riemannian manifold $(M,\overline{g}^\eps)$ is
given on $S$ by
\begin{equation}\label{eq:defnepsvol}
  \mu^\eps=\iota_{N^\eps}\vol^\eps\;,
\end{equation}
and the Riemannian gradient $\nabla_S^\eps f$ of a smooth function
$f\colon S\to\R$ is uniquely characterised by requiring that, for any
vector field $Z$ on $S$,
\begin{displaymath}
  \overline{g}^\eps(\nabla_S^\eps f,Z)=\db f(Z)\;.
\end{displaymath}
The Laplace--Beltrami operator $\Delta^\eps$ on the
Riemannian manifold $(S,i^\ast\overline{g}^\eps)$,
where $i\colon S\to M$ is the inclusion map,
is then defined
by, for a smooth function $f\colon S \to \R$,
\begin{displaymath}
  \Delta^\eps f=\operatorname{div}_{\mu^\eps}\left(\nabla_S^\eps f\right)\;.
\end{displaymath}

The following result is crucial in proving the convergence of the
Laplace--Beltrami operators $\Delta^\eps$ as $\eps\to 0$ to the intrinsic
sub-Laplacian $\Delta$.
\begin{lemma}\label{lem:surfvol}
  Uniformly on compact subsets of
  $S\setminus\car(S)$, we have
  \begin{displaymath}
    \eps n!\,\mu^\eps\to \mu\quad\text{as}\quad\eps\to 0\;.
  \end{displaymath}
\end{lemma}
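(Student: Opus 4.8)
The plan is to combine the two preceding lemmas through the linearity of the interior product, so that essentially no new computation is required. First I would use Lemma~\ref{lem:vol} to absorb the constant factor $\eps n!$ into the volume form on $M$. Since $\eps n!\,\vol^\eps=\vol$ and the interior product is linear in its form argument, for fixed $\eps>0$ we have
\begin{displaymath}
  \eps n!\,\mu^\eps=\eps n!\,\iota_{N^\eps}\vol^\eps
  =\iota_{N^\eps}\left(\eps n!\,\vol^\eps\right)=\iota_{N^\eps}\vol\;,
\end{displaymath}
where both sides are understood as forms on $S$ via the inclusion $i\colon S\to M$. In this way the $\eps$-dependence is confined entirely to the vector field $N^\eps$ in the contraction, while the fixed smooth top-form $\vol$ on $M$ has replaced the blowing-up Riemannian volume form $\vol^\eps$.

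Next, exploiting that $\mu=\iota_N\vol$ and again the linearity of the interior product, now in its vector-field argument, I would express the difference as a single contraction,
\begin{displaymath}
  \eps n!\,\mu^\eps-\mu=\iota_{N^\eps}\vol-\iota_N\vol
  =\iota_{N^\eps-N}\vol\;.
\end{displaymath}
This reduces the claim to showing that $\iota_{N^\eps-N}\vol\to 0$ uniformly on compact subsets of $S\setminus\car(S)$, which I would then deduce directly from the convergence $N^\eps\to N$ supplied by Lemma~\ref{lem:normal}.

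To make the notion of uniform convergence precise, I would fix a compact set $K\subset S\setminus\car(S)$ and cover it by finitely many coordinate charts on $M$. In each chart the coefficients of the smooth form $\vol$ are bounded on $K$, and the contraction $\iota_V\vol$ depends linearly on the components of $V$; hence the sup-norm over $K$ of $\iota_{N^\eps-N}\vol$ is controlled by a constant multiple of the sup-norm of $N^\eps-N$ on $K$, and pulling back by the fixed inclusion $i$ preserves this bound. Since Lemma~\ref{lem:normal} gives $N^\eps\to N$ uniformly on $K$, the right-hand side tends to zero, completing the argument. I do not anticipate a genuine obstacle: the only point requiring care is fixing a reference, namely local coordinates or a local frame on $M$, against which \emph{uniform convergence of forms} is measured, so that the boundedness of the coefficients of $\vol$ on $K$ and the linearity of the interior product can be combined with Lemma~\ref{lem:normal}.
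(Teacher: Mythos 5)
Your proposal is correct and takes essentially the same route as the paper, whose proof simply cites Definition~\ref{defn:hypervol}, identity~(\ref{eq:defnepsvol}), Lemma~\ref{lem:vol} and Lemma~\ref{lem:normal} as yielding the result directly; your computation $\eps n!\,\mu^\eps-\mu=\iota_{N^\eps}\vol-\iota_{N}\vol=\iota_{N^\eps-N}\vol$, together with the local boundedness of the coefficients of $\vol$, is exactly the detail the paper leaves implicit. No changes needed.
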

\begin{proof}
  This is a direct consequence of Definition~\ref{defn:hypervol}, the
  identity~(\ref{eq:defnepsvol}), Lemma~\ref{lem:vol} and
  Lemma~\ref{lem:normal}.
\end{proof}

We are finally in a position to prove Theorem~\ref{thm:limitlap}. Note
that as a result of Lemma~\ref{lem:bracketgen} and
Lemma~\ref{lem:QConS}, the intrinsic sub-Laplacian $\Delta$ is indeed
a hypoelliptic operator on $S\setminus\car(S)$ as long as $n\geq 2$.
\begin{proof}[Proof of Theorem~\ref{thm:limitlap}]
  Choose a local orthonormal frame $(Y_1,\dots, Y_{2n-1})$ for
  $\sdistr$. From the observation~(\ref{eq:horgrad}), it
  follows that the intrinsic sub-Laplacian $\Delta$ on
  $S\setminus\car(S)$ can be written as
  \begin{equation}\label{eq:subLap}
    \Delta=\sum_{i=1}^{2n-1}
    \left(Y_i^2+
      \left(\operatorname{div}_{\mu}Y_i\right)Y_i\right)\;.
  \end{equation}
  We now aim to extend the local orthonormal frame $(Y_1,\dots, Y_{2n-1})$ for
  $\sdistr$ to a local orthonormal frame for the tangent bundle
  of $S\setminus\car(S)$ with respect to the Riemannian metric
  $\overline{g}^\eps$. To this end, we again use that the
  hypersurface $S$ is locally given as the zero set
  of some smooth function $u\in C^\infty(M)$ with $\db u\not=0$ on $S$
  and we consider the vector field $Z$ on
  $S\setminus\car(S)$ given by
  \begin{displaymath}
    Z=X_0-\frac{X_0u}{Nu}N\;.
  \end{displaymath}
  This vector field $Z$ can be seen as the projection of the Reeb
  vector field $X_0$ on $M$ onto the hypersurface
  $S\setminus\car(S)$. Using (\ref{defn:sRnormal1}) and
  (\ref{eq:Rapprox}), we compute, for $\eps>0$,
  \begin{equation}\label{eq:normproj}
    \overline{g}^\eps(Z,Z)=\frac{1}{\eps^2}+\frac{(X_0u)^2}{(Nu)^2}>0\;,
  \end{equation}
  which implies that we can define a vector field $Z^\eps$ on
  $S\setminus\car(S)$ by setting
  \begin{displaymath}
    Z^\eps=\frac{Z}{\sqrt{\overline{g}^\eps(Z,Z)}}\;.
  \end{displaymath}
  Since both the Reeb vector field $X_0$ and the sub-Riemannian normal
  vector field $N$ are orthogonal with respect to the Riemannian
  metric $\overline{g}^\eps$ to any vector field in $\sdistr$, it
  follows that $(Y_1,\dots, Y_{2n-1},Z^\eps)$ is a local orthonormal
  frame for the tangent bundle of $S\setminus\car(S)$ with respect to
  the Riemannian metric $\overline{g}^\eps$. Similarly as above, we
  can then express the
  Laplace--Beltrami operator $\Delta^\eps$ on $S\setminus\car(S)$ as
  \begin{equation}\label{eq:LBeps}
    \Delta^\eps=\sum_{i=1}^{2n-1}
    \left(Y_i^2+
      \left(\operatorname{div}_{\mu^\eps}Y_i\right)Y_i\right)
    +\left(Z^\eps\right)^2
    +\left(\operatorname{div}_{\mu^\eps}Z^\eps\right)Z^\eps\;.
  \end{equation}
  From~(\ref{eq:normproj}), we deduce
  \begin{displaymath}
    \frac{1}{\sqrt{\overline{g}^\eps(Z,Z)}}\leq \eps\;,
  \end{displaymath}
  which shows that for any
  smooth function $f\in C_c^\infty(S\setminus\car(S))$
  compactly supported in $S\setminus\car(S)$, we have, as $\eps\to 0$
  and uniformly on $S\setminus\car(S)$,
  \begin{equation}\label{eq:Zepsconv}
    \left(Z^\eps\right)^2f\to 0
    \quad\text{and}\quad
    Z^\eps f\to 0\;.
  \end{equation}
  Therefore, it remains to analyse the divergence terms
  in the expression~(\ref{eq:LBeps}) for the Laplace--Beltrami
  operator $\Delta^\eps$. Working in a local coordinate chart
  $(x_1,\dots,x_{2n})$ for the hypersurface $S\setminus\car(S)$,
  we let $\rho$ and $\rho^\eps$ denote the local coefficient of $\mu$
  and $\mu^\eps$, respectively, and
  we
  use Lemma~\ref{lem:surfvol} as well as the uniform convergence of the
  derivatives on compacts, which can be established similarly, to
  argue that, for all $i\in\{1,\dots,2n\}$ and uniformly on compact
  subsets of $S\setminus\car(S)$,
  \begin{align}\label{eq:divYconv}
    \begin{aligned}
      \lim_{\eps\to 0}\operatorname{div}_{\mu^\eps}Y_i
      =\lim_{\eps\to 0}\sum_{j=1}^{2n}
      \frac{1}{\eps n!\,\rho^\eps}\frac{\pt}{\pt x_j}
      \left(\eps n!\,\rho^\eps Y_{i,j}\right)
      =\sum_{j=1}^{2n}\frac{1}{\rho}\frac{\pt}{\pt x_j}
      \left(\rho Y_{i,j}\right)
      =\operatorname{div}_{\mu}Y_i\;.
    \end{aligned}
  \end{align}
  Similarly, we conclude that, as $\eps\to 0$ and uniformly on compact
  subsets of $S\setminus\car(S)$,
  \begin{displaymath}
    \operatorname{div}_{\mu^\eps}Z\to\operatorname{div}_{\mu}Z\;.
  \end{displaymath}
  Hence, as a consequence of
  \begin{displaymath}
    \operatorname{div}_{\mu^\eps}Z^\eps
    =\frac{\operatorname{div}_{\mu^\eps}Z}{\sqrt{\overline{g}^\eps(Z,Z)}}
    +\overline{g}^\eps\left(
      \nabla_S^\eps\left(\frac{1}{\sqrt{\overline{g}^\eps(Z,Z)}}\right),Z
    \right)
  \end{displaymath}
  and since
  \begin{displaymath}
    \overline{g}^\eps\left(
      \nabla_S^\eps\left(\frac{1}{\sqrt{\overline{g}^\eps(Z,Z)}}\right),Z
    \right)
    =Z^\eps\left(\frac{1}{\sqrt{\overline{g}^\eps(Z,Z)}}\right)
    \overline{g}^\eps\left(Z^\eps,Z\right)
    =Z\left(\frac{1}{\sqrt{\overline{g}^\eps(Z,Z)}}\right)\;,
  \end{displaymath}
  we obtain that, as $\eps\to 0$ and uniformly on compact
  subsets of $S\setminus\car(S)$,
  \begin{displaymath}
    \operatorname{div}_{\mu^\eps}Z^\eps\to 0\;.
  \end{displaymath}
  Together with~(\ref{eq:Zepsconv}) and (\ref{eq:divYconv}), the
  claimed result then follows from~(\ref{eq:subLap}) and (\ref{eq:LBeps}).
\end{proof}

As a first illustration of the general strategy laid out for constructing
the intrinsic sub-Laplacian $\Delta$,
we return to our analysis for the hypersurface $\{z=0\}$ in the
Heisenberg group
$\Heis^2$ started in Example~\ref{ex:surfaceinH2}
and we demonstrate how to
derive an explicit expression for the intrinsic sub-Laplacian $\Delta$
on $\{z=0\}$ away from the unique characteristic point at the origin.
\begin{example}\label{ex:lapinH2}
  As discussed in Example~\ref{ex:surfaceinH2}, the quasi-contact
  structure
  $\sdistr$ on $S\setminus\{0\}$ admits the orthonormal frame
  $(U_1,U_2,U_3)$ with respect to $g$. It
  follows that the horizontal gradient
  $\nabla_S f$ of a smooth function $f\colon S\setminus\{0\}\to\R$ can
  be expressed as
  \begin{displaymath}
    \nabla_S f=(U_1f)U_1+(U_2f)U_2+(U_3f)U_3\;.
  \end{displaymath}
  It remains to determine the volume form $\mu$ on
  the hypersurface $S\setminus\{0\}$
  and to compute the
  divergence of the vector fields $U_1,U_2$ and $U_3$ with respect
  to $\mu$.

  The volume form $\vol$ on $\R^5$ defined by~(\ref{eq:defnOmega})
  in terms of the contact form $\omega$ in~(\ref{eq:omegainH2}) is
  given by
  \begin{displaymath}
    \vol=\omega\wedge\left(\db\omega\right)^2
    =-2\dd x_1\wedge\db y_1\wedge\db x_2\wedge\db y_2\wedge \db z\;,
  \end{displaymath}
  and the sub-Riemannian normal vector field
  $N$ along the hypersurface $S\setminus\{0\}$ in $\Heis^2$
  characterised
  by~(\ref{defn:sRnormal1}) as well as (\ref{defn:sRnormal2})
  and compatible with the orientations on $M$ and $S$
  can be written as
  \begin{displaymath}
    N=U_4-\frac{\sqrt{x_1^2+y_1^2+x_2^2+y_2^2}}{2}\frac{\pt}{\pt z}\;.
  \end{displaymath}
  It follows that defining the volume form $\mu$
  on $S\setminus\{0\}$ as $\iota_N\vol$ yields
  \begin{displaymath}
    \mu=\sqrt{x_1^2+y_1^2+x_2^2+y_2^2}\,
    \dd x_1\wedge\db y_1\wedge\db x_2\wedge\db y_2\;.
  \end{displaymath}
  This implies that
  \begin{displaymath}
    \operatorname{div}_\mu U_1
    =\frac{4}{\sqrt{x_1^2+y_1^2+x_2^2+y_2^2}}
    \quad\text{and}\quad
    \operatorname{div}_\mu U_2=
    \operatorname{div}_\mu U_3=0\;.
  \end{displaymath}
  Thus, the intrinsic sub-Laplacian $\Delta$ on
  the hypersurface $S\setminus\{0\}$ in the Heisenberg group $\Heis^2$
  can be expressed as
  \begin{equation}\label{eq:DeltainH2}
    \Delta=
    U_1^2+U_2^2+U_3^3
    +\frac{4U_1}{\sqrt{x_1^2+y_1^2+x_2^2+y_2^2}}\;.
  \end{equation}

  Due to the quasi-contact structure $\sdistr$
  on $S\setminus\{0\}$ being bracket generating, the
  intrinsic sub-Laplacian
  $\Delta$ is hypoelliptic, see
  H\"{o}rmander~\cite{hormander}. This illustrates a crucial change in
  property of the intrinsic sub-Laplacian for $n>1$ compared to the
  case $n=1$.
  As seen in~\cite{BBCH}, the operator $\Delta$ on
  surfaces in three-dimensional contact sub-Riemannian manifolds is never
  hypoelliptic as a result of $\sdistr$ being a line field in that setting.

  We close by highlighting that, as discussed in more details in the
  forthcoming analysis in
  Section~\ref{sec:model}, thanks to the drift term
  in~(\ref{eq:DeltainH2}), the intrinsic sub-Laplacian $\Delta$ on
  $S\setminus\{0\}$ is stochastically complete, and in particular,
  the stochastic process with
  generator $\frac{1}{2}\Delta$ on $S\setminus\{0\}$ almost surely
  does not hit the unique characteristic point at the origin.
\end{example}

\section{Canonical hypersurfaces in contact
  sub-Riemannian model spaces}
\label{sec:model}
We consider canonical hypersurfaces in contact sub-Riemannian
model spaces which extend the family of model cases given
in~\cite[Theorem~1.5]{BBCH}. Choosing suitable coordinates, we establish
Proposition~\ref{propn:vol} by
explicitly computing the volume form $\mu$ induced on the hypersurface
away from characteristic points. This in turn allows us to
prove Theorem~\ref{thm:radial}, which characterises
the radial part of the stochastic process with generator
$\frac{1}{2}\Delta$ and which implies that in these model
cases analysed the intrinsic sub-Laplacian
$\Delta$ defined on the hypersurface away from characteristic points
is stochastically complete, whilst the induced geometry is not
geodesically complete.

We first study $\R^{2n}$ suitably embedded in the Heisenberg group
$\Heis^n$ for $n\geq 1$, which pushes the analysis from
Example~\ref{ex:surfaceinH2} and Example~\ref{ex:lapinH2} to all
possible
dimensions, with the exception we do not provide a full
expression for the intrinsic sub-Laplacian. Instead, we
restrict our attention to its radial contribution.

We then proceed by considering the sphere $S^{2n}$ embedded in
$S^{2n+1}$ equipped with the standard sub-Riemannian contact structure
subject to an additional parameter $k>0$,
and the hyperboloid $\Sinads^{2n}$ embedded in $\ads^{2n+1}$ equipped
with the standard
sub-Riemannian contact structure subject to an additional parameter $k>0$.

\subsection{\texorpdfstring{$\R^{2n}$}{2n-space}
  embedded in \texorpdfstring{$\R^{2n+1}$}{(2n+1)-space}}
\label{sec:heis}
Let $(x_1,\dots,x_{2n},x_{2n+1})$ be Cartesian coordinates on
$\R^{2n+1}$. Use the contact form $\omega$ on $\R^{2n+1}$ given by
\begin{equation}\label{eq:contactinH}
  \omega=\frac{1}{2}\sum_{m=1}^n
  \left(x_{2m-1}\db x_{2m}-x_{2m}\db x_{2m-1}\right)-\db x_{2n+1}
\end{equation}
to define the contact structure $\distr=\ker\omega$ on $\R^{2n+1}$.
As fibre inner product $g$ on $\distr$, we take
\begin{equation}\label{eq:metricinH}
  g=\sum_{i=1}^{2n}\db x_i\otimes\db x_i\;.
\end{equation}
This is the unique fibre inner product on the distribution
  $\distr$ such that the vector fields, for $m\in\{1,\dots,n\}$,
  \begin{displaymath}
    X_{2m-1}=\frac{\pt}{\pt x_{2m-1}}-\frac{x_{2m}}{2}\frac{\pt}{\pt
      x_{2n+1}}\;,\quad
    X_{2m}=\frac{\pt}{\pt x_{2m}}+\frac{x_{2m-1}}{2}\frac{\pt}{\pt
      x_{2n+1}}
  \end{displaymath}
  form an orthonormal frame $(X_1,\dots,X_{2n})$ for $\distr$.

We obtain the contact sub-Riemannian manifold $(\R^{2n+1},\distr,g)$,
which is referred to as Heisenberg group $\Heis^n$. The contact form
$\omega$ given in~(\ref{eq:contactinH}) satisfies the imposed
normalisation condition~(\ref{eq:normalise}) because
\begin{displaymath}
  \db\omega=\sum_{m=1}^n\db x_{2m-1}\wedge \db x_{2m}
\end{displaymath}
gives rise to
\begin{displaymath}
  \left(\db\omega\right)^n
  =n!\bigwedge_{i=1}^{2n}\db x_i\;,
\end{displaymath}
whilst~(\ref{eq:metricinH}) implies that
\begin{displaymath}
  \mathrm{vol}_g=\bigwedge_{i=1}^{2n}\db x_i\;.
\end{displaymath}
We further deduce that the volume form $\vol$ on
$\R^{2n+1}$ defined by~(\ref{eq:defnOmega}) can be expressed as
\begin{equation}\label{eq:amvolinH}
  \vol=- n!\bigwedge_{i=1}^{2n+1}\db x_i\;.
\end{equation}

The hypersurface $S$ in $\Heis^n$ which we study closer is the one
given by $\{x_{2n+1}=0\}$. Since, for $m\in\{1,\dots,n\}$, we have
\begin{displaymath}
  \omega\left(\frac{\pt}{\pt x_{2m-1}}\right)
  =-\frac{x_{2m}}{2}
  \quad\text{and}\quad
  \omega\left(\frac{\pt}{\pt x_{2m}}\right)
  =\frac{x_{2m-1}}{2}\;,
\end{displaymath}
the set $\car(S)$ of characteristic points contains only the origin
of $\R^{2n+1}$. Moreover, the quasi-contact form $\quasi$ induced on
$S\setminus\car(S)$ by the contact form $\omega$ on $\R^{2n+1}$ is
\begin{displaymath}
  \quasi=\frac{1}{2}\sum_{m=1}^n
  \left(x_{2m-1}\db x_{2m}-x_{2m}\db x_{2m-1}\right)\;.
\end{displaymath}
Due to the kernel $\left.\ker\db\quasi\right|_{\sdistr}$ with
$\sdistr=\ker\quasi$ being guaranteed to be one-dimensional by
Lemma~\ref{lem:QConS}, we can verify directly that
\begin{equation}\label{eq:foliainH}
  \left.\ker\db\quasi\right|_{\sdistr}
  =\operatorname{span}
  \left\{
    \sum_{i=1}^{2n}x_i\frac{\pt}{\pt x_i}\right\}\;.
\end{equation}

The lemma stated below provides an expression for the sub-Riemannian
normal vector field $N$
along $S\setminus\car(S)$ in $\Heis^n$, which we prove in detail as a
similar approach can be used to confirm the expressions for the
sub-Riemannian normal
vector fields in Section~\ref{sec:sphere} and
Section~\ref{sec:hyper}.
\begin{lemma}\label{lem:NinH}
  The sub-Riemannian normal vector field $N$ along $S\setminus\car(S)$
  in $\Heis^n$ is given by
  \begin{displaymath}
    N=\frac{1}{\sqrt{\sum_{i=1}^{2n}x_i^2}}
    \left(\sum_{m=1}^n\left(        
       x_{2m}\frac{\pt}{\pt x_{2m-1}}-x_{2m-1}\frac{\pt}{\pt
         x_{2m}}\right)
     -\frac{1}{2}\sum_{i=1}^{2n}x_i^2\frac{\pt}{\pt x_{2n+1}}
    \right)\;.
  \end{displaymath}  
\end{lemma}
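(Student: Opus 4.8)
The plan is to verify that the vector field in the statement satisfies each of the four defining properties of the sub-Riemannian normal collected in Definition~\ref{defn:sRnormal}: membership in the contact distribution $\distr$, unit $g$-length, $g$-orthogonality to the quasi-contact distribution $\sdistr$, and compatibility with the fixed orientations of $M$ and $S$. The decisive simplification I would make at the outset is to re-express $N$ in the orthonormal frame $(X_1,\dots,X_{2n})$ for $\distr$, namely
\begin{displaymath}
  N=\frac{1}{\rho}\sum_{m=1}^{n}\left(x_{2m}X_{2m-1}-x_{2m-1}X_{2m}\right),
  \qquad \rho=\sqrt{\textstyle\sum_{i=1}^{2n}x_i^2}\;,
\end{displaymath}
and to check that inserting the definitions of $X_{2m-1}$ and $X_{2m}$ recovers the stated expression, with the various $\partial/\partial x_{2n+1}$ contributions collecting into the term $-\tfrac{1}{2}\rho^2\,\partial/\partial x_{2n+1}$.

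Written in this form the first two conditions are immediate: since each $X_i$ lies in $\distr=\ker\omega$ we obtain $\omega(N)=0$ automatically, while the $g$-orthonormality of $(X_1,\dots,X_{2n})$ together with $\sum_{m=1}^{n}(x_{2m}^2+x_{2m-1}^2)=\rho^2$ gives $g(N,N)=1$. For orthogonality to $\sdistr$ I would take any $Y\in\sdistr=\distr\cap TS$ and expand it in the same frame as $Y=\sum_{i=1}^{2n}b_iX_i$; the tangency condition $Y\in TS$, that is, the vanishing of the $\partial/\partial x_{2n+1}$ component, amounts to $\sum_{m=1}^{n}(x_{2m-1}b_{2m}-x_{2m}b_{2m-1})=0$, equivalently $\quasi(Y)=0$. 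Since the left-hand side equals $-\rho\,g(N,Y)$, this forces $g(N,Y)=0$. These three algebraic conditions determine $N$ uniquely up to an overall sign.

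Fixing that sign is the one genuinely delicate point and the place I would take most care. I would orient $S=\{x_{2n+1}=0\}$ by the coordinate frame $(\partial/\partial x_1,\dots,\partial/\partial x_{2n})$ and, using that orientation is unchanged under positive-determinant changes of frame, test the orientation clause of Definition~\ref{defn:sRnormal} directly against the volume form $\vol=-n!\,\db x_1\wedge\dots\wedge\db x_{2n+1}$ from~(\ref{eq:amvolinH}). As $\partial/\partial x_1,\dots,\partial/\partial x_{2n}$ fill the last $2n$ arguments, only the $\partial/\partial x_{2n+1}$ component of $N$ contributes, and a parity count for the permutation moving $\partial/\partial x_{2n+1}$ into position yields
\begin{displaymath}
  \vol\left(N,\frac{\pt}{\pt x_1},\dots,\frac{\pt}{\pt x_{2n}}\right)
  =-n!\left(-\frac{\rho}{2}\right)=\frac{n!\,\rho}{2}>0\;,
\end{displaymath}
so $(N,\partial/\partial x_1,\dots,\partial/\partial x_{2n})$ is positively oriented for $M$ and the displayed sign is the correct one. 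Equivalently, the sign can be read off formula~(\ref{eq:Ninframe}) applied to the defining function $u=x_{2n+1}$, for which $X_{2m-1}u=-\tfrac{x_{2m}}{2}$, $X_{2m}u=\tfrac{x_{2m-1}}{2}$ and $\sum_{i=1}^{2n}(X_iu)^2=\tfrac{\rho^2}{4}$, the choice $\sigma=1$ reproducing $N$. Thus the main obstacle is purely the orientation bookkeeping, whereas every other step becomes routine once $N$ has been rewritten in the frame $(X_1,\dots,X_{2n})$.
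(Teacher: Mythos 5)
Your proof is correct and takes essentially the same route as the paper's: a direct verification of the defining conditions $\omega(N)=0$, $g(N,N)=1$, $g(N,Y)=0$ for $Y\in\sdistr$ (the paper uses the equivalent identity $g(N,Y)=-2\quasi(Y)\big/\sqrt{\textstyle\sum_{i=1}^{2n}x_i^2}$ where you expand $Y$ in the frame $(X_1,\dots,X_{2n})$), together with positivity of $\iota_N\vol$ for the orientation. Your rewriting of $N$ in the orthonormal frame and your explicit determinant computation fixing the sign are simply more detailed renditions of the paper's terser checks, and both are accurate.
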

\begin{proof}
  Since the vector field $N$ is well-defined along the hypersurface
  $S$ away from the unique characteristic point at the origin of
  $\R^{2n+1}$, it remains to check that $N$ satisfies the defining
  properties~(\ref{defn:sRnormal1}) as well as (\ref{defn:sRnormal2})
  and that it is compatible with the orientations.
  
  From the expressions for the contact form $\omega$
  in~(\ref{eq:contactinH}) and the fibre inner product $g$ in
  (\ref{eq:metricinH}), it follows that
  \begin{displaymath}
    \omega(N)=0
    \quad\text{and}\quad
    g(N,N)=1\;.
  \end{displaymath}
  Furthermore, using that any vector field $Y$ in the
  distribution $\sdistr$ satisfies $\quasi(Y)=0$, we deduce
  \begin{displaymath}
    g(N,Y)=-\frac{2\quasi(Y)}{\sqrt{\sum_{i=1}^{2n}x_i^2}}=0\;.
  \end{displaymath}
  Finally, we obtain from~(\ref{eq:amvolinH}) that $\iota_N\vol$ is
  positive on $S\setminus\car(S)$,
  which shows that $N$ is indeed the sub-Riemannian normal vector field along
  $S\setminus\car(S)$ in $\Heis^n$ according to
  Definition~\ref{defn:sRnormal}.
\end{proof}

Using the expression~(\ref{eq:amvolinH}) for the volume form $\vol$
on $\R^{2n+1}$ as well as Lemma~\ref{lem:NinH}, we compute that the volume
form $\mu$ defined on $S\setminus\car(S)$ as $\iota_N\vol$ is given by
\begin{equation}\label{eq:volinH}
  \mu=\frac{n!}{2}\sqrt{\sum_{i=1}^{2n}x_i^2}
  \bigwedge_{i=1}^{2n}\db x_i\;.
\end{equation}
At this point, it is convenient to
change from Cartesian coordinates $(x_1,\dots,x_{2n})$
for $S\setminus\car(S)$
to spherical
coordinates $(r,\varphi_1,\dots,\varphi_{2n-1})$
with $r>0$, $\varphi_1,\dots,\varphi_{2n-2}\in[0,\pi]$ and
$\varphi_{2n-1}\in[0,2\pi)$, where
\begin{align*}
  x_i&=r\cos(\varphi_i)\prod_{l=1}^{i-1}\sin(\varphi_l)
       \quad\text{for }i\in\{1,\dots,2n-1\}\;,\\
  x_{2n}&=r\prod_{l=1}^{2n-1}\sin(\varphi_l)\;.
\end{align*}
By means of induction over $n\geq 1$, it can be shown explicitly that
the determinant of the
associated Jacobian matrix $J_{2n}$ equals
\begin{displaymath}
  \det J_{2n}=r^{2n-1}\prod_{i=1}^{2n-2}\left(\sin(\varphi_i)\right)^{2n-i-1}\;.
\end{displaymath}
Since we further know that
\begin{displaymath}
  \bigwedge_{i=1}^{2n}\db x_i
  =\det J_{2n}\dd r\wedge\bigwedge_{i=1}^{2n-1}\db\varphi_i
  \quad\text{and}\quad
  \sqrt{\sum_{i=1}^{2n}x_i^2}=r\;,
\end{displaymath}
the expression for the volume form $\mu$ on $S\setminus\car(S)$ in
$\Heis^n$ stated in Proposition~\ref{propn:vol} follows
from~(\ref{eq:volinH}).

We close by analysing the radial part of the stochastic process with
generator $\frac{1}{2}\Delta$ on $S\setminus\car(S)$.
Using~(\ref{eq:metricinH}), (\ref{eq:foliainH}) and
\begin{displaymath}
  \frac{\pt}{\pt r}
  =\sum_{i=1}^{2n}\frac{\pt x_i}{\pt r}\frac{\pt}{\pt x_i}
  =\frac{1}{\sqrt{\sum_{i=1}^{2n}x_i^2}}
    \sum_{i=1}^{2n}x_i\frac{\pt}{\pt x_i}\;,
\end{displaymath}
we obtain that
\begin{displaymath}
  \left.\ker\db\quasi\right|_{\sdistr}
  =\operatorname{span}
  \left\{\frac{\pt}{\pt r}\right\}
  \quad\text{as well as}\quad
  g\left(\frac{\pt}{\pt r},\frac{\pt}{\pt r}\right)=1\;.
\end{displaymath}
Thus, the vector field
$R=\frac{\pt}{\pt r}$ defined on $S\setminus\car(S)$ is a unit-length
representative of the characteristic foliation induced on the hypersurface $S$
by the contact structure $\distr$. We compute
\begin{displaymath}
  \operatorname{div}_\mu\left(R\right)=\frac{2n}{r}\;,
\end{displaymath}
which implies that the radial part of the stochastic process with
generator $\frac{1}{2}\Delta$ on $S\setminus\car(S)$ is the
one-dimensional diffusion process on $(0,\infty)$ with generator
\begin{displaymath}
  \frac{1}{2}\frac{\pt^2}{\pt r^2}+\frac{n}{r}\frac{\pt}{\pt r}\;.
\end{displaymath}
This indeed gives rise to a Bessel process of order $2n+1$, which proves
the first part of Theorem~\ref{thm:radial}. Since a Bessel process of order
$2n+1$ for all $n\geq 1$ almost surely neither hits the origin nor
explodes in finite time, it follows that the intrinsic sub-Laplacian
$\Delta$ on $S\setminus\car(S)$
is stochastically complete. On the
other hand, the geometry induced on the hypersurface
$S\setminus\car(S)$ is not
geodesically complete because rays emanating from the characteristic
point and
parameterised by arc length are geodesics which cannot be extended
indefinitely towards the characteristic point not included in
the underlying space.

\begin{remark}
  Taking $n=1$, we recover the analysis for the plane $\{x_3=0\}$ in
  the Heisenberg group $\Heis^1$ which arises
  from~\cite[Section~4.1]{BBCH} by considering $a=0$, with the
  contact forms differing by a sign as a result of the normalisation
  conditions differing by a sign.
\end{remark}

\subsection{\texorpdfstring{$S^{2n}$}{2n-sphere}
  embedded in \texorpdfstring{$S^{2n+1}$}{(2n+1)-sphere}}
\label{sec:sphere}

In terms of Cartesian coordinates $(x_1,\dots,x_{2n+2})$ for $\R^{2n+2}$, we
take $S^{2n+1}\subset \R^{2n+2}$ to be
\begin{displaymath}
  S^{2n+1}=\left\{(x_1,\dots,x_{2n+2})\in\R^{2n+2}:
    \sum_{i=1}^{2n+2}x_i^2=1
  \right\}\;.
\end{displaymath}
Fix $k\in\R$ positive and consider the contact form $\omega$ on the
sphere $S^{2n+1}$ given by
\begin{equation}\label{eq:cf4sphere}
  \omega=\frac{1}{2k^2}\sum_{m=1}^{n+1}\left(
    x_{2m-1}\db x_{2m}-x_{2m}\db x_{2m-1}
  \right)\;.
\end{equation}
We further equip the contact structure $\distr=\ker\omega$  on
$S^{2n+1}$ with a smooth fibre inner product $g$ obtained by
restricting a positive constant multiple of the Euclidean inner
product $\langle\cdot,\cdot\rangle$ on $\R^{2n+2}$. More precisely, we set,
for vector fields $X_1$ and $X_2$ in $\distr$,
\begin{displaymath}
  g(X_1,X_2)=\frac{1}{k^2}\langle X_1,X_2\rangle\;.
\end{displaymath}
This construction gives rise to the standard sub-Riemannian contact
structure $(\distr,g)$ on $S^{2n+1}$ with an additional parameter
$k>0$ which mimics the introduction of an additional scalar
in~\cite[Section~5.1]{BBCH} and which later allows us to recover all
Legendre processes of order $2n+1$.

It follows from the following considerations that the
choice~(\ref{eq:cf4sphere}) of contact form $\omega$ is line with the
normalisation condition~(\ref{eq:normalise}).
We compute
\begin{displaymath}
  \db\omega=\frac{1}{k^2}\sum_{m=1}^{n+1}\db x_{2m-1}\wedge \db x_{2m}
\end{displaymath}
as well as
\begin{displaymath}
  \left(\db\omega\right)^n
  =\frac{n!}{k^{2n}}
  \sum_{m=1}^{n+1}\bigwedge_{\substack{l=1 \\ l\not=2m-1,2m}}^{2n+2}\db x_l\;,
\end{displaymath}
which implies that the volume form $\vol$ on $S^{2n+1}$ defined
by~(\ref{eq:defnOmega}) takes the form
\begin{equation}\label{eq:Shelp1}
  \vol=\frac{n!}{2k^{2n+2}}
  \sum_{i=1}^{2n+2}(-1)^{i-1}x_i
    \bigwedge_{\substack{l=1 \\ l\not= i}}^{2n+2}\db x_l\;.
\end{equation}
On the other hand, the volume form on Euclidean space $\R^{2n+2}$ with
respect to the inner product $\frac{1}{k^2}\langle\cdot,\cdot\rangle$
can be expressed as
\begin{displaymath}
  \frac{1}{k^{2n+2}}\bigwedge_{i=1}^{2n+2}\db x_i\;.
\end{displaymath}
Since $(kx_1,\dots,kx_{2n+2})$ is the unit normal vector at
$(x_1,\dots,x_{2n+2})\in S^{2n+1}$ for the inner product
$\frac{1}{k^2}\langle\cdot,\cdot\rangle$, the above volume form on
$\R^{2n+2}$ induces the volume form $\operatorname{vol}_k^{S^{2n+1}}$
on the sphere $S^{2n+1}$ given by
\begin{equation}\label{eq:Shelp2}
  \operatorname{vol}_k^{S^{2n+1}}
  =\frac{1}{k^{2n+1}}\sum_{i=1}^{2n+2}(-1)^{i-1}x_i
  \bigwedge_{\substack{l=1 \\ l\not= i}}^{2n+2}\db x_l\;.
\end{equation}
To restrict the volume form $\operatorname{vol}_k^{S^{2n+1}}$ to the
contact structure $\distr$, we use the vector field $\hat{X}_0$ defined by
\begin{equation}\label{eq:Shelp3}
  \hat{X}_0=k\sum_{m=1}^{n+1}\left(
    x_{2m-1}\frac{\pt}{\pt x_{2m}}-x_{2m}\frac{\pt}{\pt x_{2m-1}}
  \right)\;,
\end{equation}
which is the positive constant multiple of the Reeb vector field $X_0$
such that
\begin{displaymath}
  \frac{1}{k^2}\langle\hat{X}_0,\hat{X}_0\rangle=1\;.
\end{displaymath}
To establish that the contact form $\omega$ indeed satisfies the
normalisation condition~(\ref{eq:normalise}),
it remains to observe that~(\ref{eq:Shelp1}), (\ref{eq:Shelp2})
and (\ref{eq:Shelp3}) imply
\begin{displaymath}
  \left.\left(\db\omega\right)^n\right|_\distr
  =\frac{1}{\omega(\hat{X}_0)}\iota_{\hat{X}_0}\vol
  =n!\,\iota_{\hat{X}_0}\operatorname{vol}_k^{S^{2n+1}}
  =n!\operatorname{vol}_g\;.
\end{displaymath}

In the contact sub-Riemannian manifold $(S^{2n+1},\distr,g)$ with
parameter $k>0$, we study the hypersurface $S$
given by $\{x_{2n+2}=0\}$. Phrased differently, we choose
\begin{displaymath}
  S=\left\{(x_1,\dots,x_{2n+2})\in S^{2n+1}:
    \sum_{i=1}^{2n+1}x_i^2=1
  \right\}\;,
\end{displaymath}
which shows that the hypersurface $S$ can be identified with the
sphere $S^{2n}$. The set $\car(S)$ of characteristic points
contains exactly the two poles given by
\begin{displaymath}
  x_1=x_2=\dots=x_{2n}=0
  \quad\text{and}\quad
  x_{2n+1}=\pm 1\;,
\end{displaymath}
and the contact form $\omega$ on $S^{2n+1}$ induces the quasi-contact
form $\quasi$ on $S\setminus\car(S)$ defined by
\begin{equation}\label{eq:quasiinS}
  \quasi=\frac{1}{2k^2}\sum_{m=1}^{n}\left(
    x_{2m-1}\db x_{2m}-x_{2m}\db x_{2m-1}
  \right)\;.
\end{equation}

Using the approach demonstrated in the proof of Lemma~\ref{lem:NinH},
one verifies that the sub-Riemannian
normal vector field $N$ along $S\setminus\car(S)$ in
$S^{2n+1}$ can be expressed as
\begin{displaymath}
  N=\frac{k}{\sqrt{\sum_{i=1}^{2n}x_i^2}}
  \left(\sum_{m=1}^n\left(
      x_{2m}x_{2n+1}\frac{\pt}{\pt x_{2m-1}}
      -x_{2m-1}x_{2n+1}\frac{\pt}{\pt x_{2m}}\right)
    +\sum_{i=1}^{2n}x_i^2\frac{\pt}{\pt x_{2n+2}}
  \right)\;.
\end{displaymath}
This allows us to prove the next result.

\begin{lemma}\label{lem:volinsphere}
  The volume form $\mu$ defined on $S\setminus\car(S)$ as
  $\iota_N\vol$ is given by
  \begin{displaymath}
    \mu=\frac{n!}{2k^{2n+1}}\sqrt{\sum_{i=1}^{2n}x_i^2}
    \sum_{i=1}^{2n+1}(-1)^{i-1}x_i
    \bigwedge_{\substack{l=1 \\ l\not= i}}^{2n+1}\db x_l\;.
  \end{displaymath}
\end{lemma}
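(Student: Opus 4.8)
The plan is to compute the contraction $\iota_N\vol$ from the expression~(\ref{eq:Shelp1}) for $\vol$ together with the formula for the sub-Riemannian normal $N$ displayed just above the statement, and then to restrict the resulting $2n$-form to the hypersurface $S=\{x_{2n+2}=0\}$. The simplification I would exploit from the outset is that pulling back to $S$ kills every term carrying either the coefficient $x_{2n+2}$, which vanishes on $S$, or the differential $\db x_{2n+2}$, since $\db x_{2n+2}$ restricts to zero on $TS$. Only the part of $\iota_N\vol$ free of both survives, which sharply cuts down the bookkeeping.

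It is convenient to first record that $\vol=\tfrac{n!}{2k^{2n+2}}\,\iota_P\vol_0$, where $\vol_0=\db x_1\wedge\cdots\wedge\db x_{2n+2}$ is the ambient Euclidean volume form and $P=\sum_{i=1}^{2n+2}x_i\,\pt/\pt x_i$ is the position vector field, so that $\iota_P\vol_0=\sum_i(-1)^{i-1}x_i\bigwedge_{l\neq i}\db x_l$ recovers~(\ref{eq:Shelp1}). Because $N$ is tangent to $S^{2n+1}$, the naturality identity $\iota_N(j^\ast\beta)=j^\ast(\iota_N\beta)$ for the inclusion $j\colon S^{2n+1}\to\R^{2n+2}$ justifies contracting the ambient representative and only afterwards restricting to $S$. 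Reading off the components of $N$, the only one producing a surviving contribution is $N^{2n+2}=k\sqrt{\sum_{i=1}^{2n}x_i^2}$: indeed $N$ has no $\pt/\pt x_{2n+1}$ part, and contracting any component $N^{j}$ with $j\le 2n$ into a term indexed by $i\neq 2n+2$ leaves a wedge still containing $\db x_{2n+2}$, hence vanishing on $S$.

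The core computation is then the contraction of $N^{2n+2}\,\pt/\pt x_{2n+2}$ against the terms of $\vol$ indexed by $i\neq 2n+2$, each of which carries $\db x_{2n+2}$ in the last slot of $\bigwedge_{l\neq i}\db x_l$; the single term $i=2n+2$ drops out because its coefficient $x_{2n+2}$ vanishes on $S$. Contracting $\pt/\pt x_{2n+2}$ out of the final slot yields the sign $(-1)^{2n}=+1$, so the surviving contribution is
\begin{displaymath}
  \frac{n!}{2k^{2n+2}}\cdot k\sqrt{\sum_{i=1}^{2n}x_i^2}\,
  \sum_{i=1}^{2n+1}(-1)^{i-1}x_i\bigwedge_{\substack{l=1\\l\neq i}}^{2n+1}\db x_l\;,
\end{displaymath}
which simplifies to the claimed expression for $\mu$. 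I would close by confirming that nothing else survives: for $i\neq 2n+2$ and a contracted index $j\notin\{i,2n+2\}$ the leftover wedge still contains $\db x_{2n+2}$, and for $i=2n+2$ the coefficient is zero on $S$.

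The step I expect to demand the most care is the sign-and-slot bookkeeping in the interior product, namely verifying precisely which terms survive the pullback and that the accumulated sign is $+1$; the only genuinely conceptual point is the justification for contracting the ambient representative before restricting, which rests on $N$ being tangent to $S^{2n+1}$, and which is otherwise routine.
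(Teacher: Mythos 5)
Your proposal is correct and follows essentially the same route as the paper's proof: both contract $N$ into the ambient-coordinate expression~(\ref{eq:Shelp1}), drop the $i=2n+2$ term since $x_{2n+2}=0$ on $S$, observe that after restriction to $TS$ only the contraction against the $\db x_{2n+2}$ factor survives, and conclude via $\iota_N\db x_{2n+2}=k\sqrt{\sum_{i=1}^{2n}x_i^2}$. Your extra bookkeeping (the sign $(-1)^{2n}=+1$ and the naturality of the interior product under the inclusion) merely makes explicit what the paper leaves implicit.
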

\begin{proof}
  Since the hypersurface $S$ is defined by $\{x_{2n+2}=0\}$ in
  $S^{2n+1}$, the interior product $\iota_N\vol$ on
  $S\setminus\car(S)$ simplifies to
  \begin{align*}
    \iota_N\vol
    &=\iota_N\left(\frac{n!}{2k^{2n+2}}\sum_{i=1}^{2n+1}(-1)^{i-1}x_i
      \bigwedge_{\substack{l=1 \\ l\not= i}}^{2n+2}\db x_l\right)\\
    &=\frac{n!}{2k^{2n+2}}
      \sum_{i=1}^{2n+1}(-1)^{i-1}x_i\left(\iota_N\db x_{2n+2}\right)
    \bigwedge_{\substack{l=1 \\ l\not= i}}^{2n+1}\db x_l\;.
  \end{align*}
  Due to
  \begin{displaymath}
    \iota_N\db x_{2n+2}
    =k\sqrt{\sum_{i=1}^{2n}x_i^2}
  \end{displaymath}
  the claimed result follows.
\end{proof}

To show that Lemma~\ref{lem:volinsphere} gives rise to the expression
for the volume form $\mu$ stated in Proposition~\ref{propn:vol},
it remains to change to spherical coordinates
$(r,\varphi_1,\dots,\varphi_{2n-1})$ for $S\setminus\car(S)$ where
$r\in(0,\frac{\pi}{k})$, $\varphi_1,\dots,\varphi_{2n-2}\in[0,\pi]$ and
$\varphi_{2n-1}\in[0,2\pi)$ are such that
\begin{align*}
  x_i&=\sin(kr)\cos(\varphi_i)\prod_{l=1}^{i-1}\sin(\varphi_l)
       \quad\text{for }i\in\{1,\dots,2n-1\}\;,\\
  x_{2n}&=\sin(kr)\prod_{l=1}^{2n-1}\sin(\varphi_l)\;,\\
  x_{2n+1}&=\cos(kr)\;.
\end{align*}
Using $\sin(kr)>0$ for $r\in(0,\frac{\pi}{k})$, we obtain
\begin{displaymath}
  \sqrt{\sum_{i=1}^{2n}x_i^2}=\sin(kr)\;.
\end{displaymath}
Comparing the expressions in Cartesian coordinates and in spherical
coordinates for the volume form of a $2n$-dimensional
Euclidean sphere, we deduce that
\begin{displaymath}
  \sum_{i=1}^{2n+1}(-1)^{i-1}x_i
  \bigwedge_{\substack{l=1 \\ l\not= i}}^{2n+1}\db x_l
\end{displaymath}
can be written as
\begin{displaymath}
  k\left(\sin(kr)\right)^{2n-1}\left(
  \prod_{i=1}^{2n-2}\left(\sin(\varphi_i)\right)^{2n-i-1}\right)
    \db r\wedge \db\varphi_1\wedge\dots\wedge\db\varphi_{2n-1}\;.
\end{displaymath}
The result claimed in Proposition~\ref{propn:vol} for
$(S^{2n+1},\distr,g)$ with parameter $k>0$ then follows from
Lemma~\ref{lem:volinsphere}.

In the last part, we discuss the random dynamics induced by
the operator $\frac{1}{2}\Delta$ on the characteristic foliation of
$S$. We have, with $\sdistr=\ker\quasi$,
\begin{displaymath}
  \left.\ker\db\quasi\right|_{\sdistr}=
  \operatorname{span}\left\{\frac{\pt}{\pt r}\right\}\;,
\end{displaymath}
which is implied by~(\ref{eq:quasiinS}) and
\begin{displaymath}
  \frac{\pt}{\pt r}=\sum_{i=1}^{2n+1}
  \frac{\pt x_i}{\pt r}\frac{\pt}{\pt x_i}
  =\frac{k}{\sqrt{\sum_{i=1}^{2n}x_i^2}}
  \left(\sum_{i=1}^{2n}x_ix_{2n+1}\frac{\pt}{\pt x_i}
    -\sum_{i=1}^{2n}x_i^2\frac{\pt}{\pt x_{2n+1}}\right)\;.
\end{displaymath}
We further compute
\begin{displaymath}
  g\left(\frac{\pt}{\pt r},\frac{\pt}{\pt r}\right)
  =\frac{1}{\sum_{i=1}^{2n}x_i^2}\left(\sum_{i=1}^{2n}x_i^2x_{2n+1}^2
    +\left(\sum_{i=1}^{2n}x_i^2\right)^2\right)
  =x_{2n+1}^2+\sum_{i=1}^{2n}x_i^2
  =1\;,
\end{displaymath}
showing that the representative $R=\frac{\pt}{\pt r}$ on
$S\setminus\car(S)$ of the
characteristic foliation of $S$ is a unit-length
vector field. From Proposition~\ref{propn:vol}, we obtain
\begin{displaymath}
  \operatorname{div}_\mu\left(R\right)
  =\frac{2n k\cos\left(kr\right)}{\sin\left(kr\right)}
  =2nk\cot\left(kr\right)\;.
\end{displaymath}
This establishes the second part of Theorem~\ref{thm:radial} that the
latitudinal process between the two
characteristic points on $S$ of the stochastic process with generator
$\frac{1}{2}\Delta$ on $S\setminus\car(S)$ follows the one-dimensional
diffusion process on $(0,\frac{\pi}{k})$ with generator
\begin{displaymath}
  \frac{1}{2}\frac{\pt^2}{\pt r^2}
  +nk\cot\left(kr\right)\frac{\pt}{\pt r}\;,
\end{displaymath}
that is, a Legendre process of order $2n+1$ and with parameter
$k>0$. Similarly to the observations
made in Section~\ref{sec:heis}, as a Legendre process of order $2n+1$
for $n\geq 1$ almost surely hits neither endpoint of the interval
$(0,\frac{\pi}{k})$, we deduce that the intrinsic sub-Laplacian
$\Delta$ on $S\setminus\car(S)$
is stochastically complete, whilst the
geometry induced on $S\setminus\car(S)$ is not geodesically complete.

\begin{remark}
  For $n=1$, the analysis presented above is in line with the
  discussions in~\cite[Section~5.1]{BBCH} for the sphere $S^2$
  embedded in $\mathrm{SU}(2)$, which is isomorphic to $S^3$, equipped
  with the standard sub-Riemannian contact structure.
\end{remark}

\subsection{\texorpdfstring{$\Sinads^{2n}$}{2n-hyperboloid}
  embedded in
  \texorpdfstring{$\ads^{2n+1}$}{(2n+1)-hyperboloid}}
\label{sec:hyper}
Our construction closely mimics the hyperboloid model for
hyperbolic space. Let
$(x_1,\dots,x_{2n+2})$ denote Cartesian coordinates on $\R^{2n+2}$
and consider the $(2n+1)$-dimensional hyperboloid $\ads^{2n+1}$
defined as
\begin{displaymath}
  \ads^{2n+1}=\left\{(x_1,\dots,x_{2n+2})\in\R^{2n+2}:
    \sum_{i=1}^{2n}x_i^2-x_{2n+1}^2-x_{2n+2}^2=-1
  \right\}\;.
\end{displaymath}
Fix $k\in\R$ positive and let $\eta$ be the Lorentzian metric on
$\R^{2n,2}$ given by
\begin{displaymath}
  \eta=\sum_{i=1}^{2n}\db x_i\otimes\db x_i
  -\db x_{2n+1}\otimes\db x_{2n+1}-\db x_{2n+2}\otimes\db x_{2n+2}\;.
\end{displaymath}
Using the contact form $\omega$ on the anti-de Sitter space
$\ads^{2n+1}$ defined by
\begin{equation}\label{eq:cf4hyper}
  \omega=\frac{1}{2k^2}\sum_{m=1}^{n+1}\left(
    x_{2m-1}\db x_{2m}-x_{2m}\db x_{2m-1}
  \right)\;,
\end{equation}
we get the contact structure $\distr=\ker\omega$ on $\ads^{2n+1}$
which we equip with the smooth fibre inner product $g$ obtained by setting,
for vector fields $X_1$ and $X_2$ in 
$\distr$,
\begin{equation}\label{eq:ginhyper}
  g(X_1,X_2)=\frac{1}{k^2}\,\eta(X_1,X_2)\;.
\end{equation}
This yields the standard
sub-Riemannian contact structure $(\distr,g)$ on
$\ads^{2n+1}$ subject to
an additional parameter $k>0$. Note that~(\ref{eq:ginhyper})
indeed defines a smooth fibre inner product $g$ on the contact
structure $\distr$ because the Reeb vector field $X_0$ on $H^{2n+1}$
given by
\begin{displaymath}
  X_0=2k^2\left(
    x_{2n+1}\frac{\pt}{\pt x_{2n+2}}-x_{2n+2}\frac{\pt}{\pt x_{2n+1}}-
  \sum_{m=1}^{n}\left(
    x_{2m-1}\frac{\pt}{\pt x_{2m}}-x_{2m}\frac{\pt}{\pt x_{2m-1}}
  \right)\right)
\end{displaymath}
is timelike due to
\begin{displaymath}
  \eta(X_0,X_0)=-4k^4\;,
\end{displaymath}
which implies that the distribution $\distr$ is spanned by spacelike
vector fields.

As in Section~\ref{sec:sphere}, the volume form $\vol$ on $\ads^{2n+1}$
given by~(\ref{eq:defnOmega}) can be expressed as
\begin{displaymath}
  \vol=\frac{n!}{2k^{2n+2}}
  \sum_{i=1}^{2n+2}(-1)^{i-1}x_i
  \bigwedge_{\substack{l=1 \\ l\not= i}}^{2n+2}\db x_l\;,
\end{displaymath}
and one can show as before that the
choice~(\ref{eq:cf4hyper}) for the contact form $\omega$ satisfies the
normalisation condition~(\ref{eq:normalise}).

Many of the subsequent computations are similar to the computations
performed in Section~\ref{sec:sphere}, but because the sub-Riemannian
metric in this section is obtained by restricting a Lorentzian
metric we choose to
treat these two families of model cases separately for clarity.

The hypersurface $S$ in $(\ads^{2n+1},\distr,g)$ which we study below is the
upper sheet of the hypersurface given by $\{x_{2n+2}=0\}$ or, phrased
differently,
\begin{displaymath}
  S=\left\{(x_1,\dots,x_{2n+2})\in \ads^{2n+1}:
    \sum_{i=1}^{2n}x_i^2-x_{2n+1}^2=-1\;,\enspace
    x_{2n+1}>0
  \right\}\;,
\end{displaymath}
that is, $S$ can be identified with the upper
sheet of a $2n$-dimensional two-sheeted hyperboloid. The hypersurface $S$ has a
unique characteristic point given by
\begin{displaymath}
  x_1=x_2=\dots=x_{2n}=0
  \quad\text{and}\quad
  x_{2n+1}=1\;,
\end{displaymath}
and inherits from the contact form $\omega$ on $\ads^{2n+1}$ the
quasi-contact form $\quasi$ on $S\setminus\car(S)$ 
defined by
\begin{equation}\label{eq:quasiinhyper}
  \quasi=\frac{1}{2k^2}\sum_{m=1}^{n}\left(
    x_{2m-1}\db x_{2m}-x_{2m}\db x_{2m-1}
  \right)\;.
\end{equation}

Analogous to Section~\ref{sec:sphere}, the
sub-Riemannian normal vector field $N$ along the hypersurface
$S\setminus\car(S)$ in $\ads^{2n+1}$ can be written as
\begin{displaymath}
  N=\frac{k}{\sqrt{\sum_{i=1}^{2n}x_i^2}}
  \left(\sum_{m=1}^n\left(
      x_{2m}x_{2n+1}\frac{\pt}{\pt x_{2m-1}}
      -x_{2m-1}x_{2n+1}\frac{\pt}{\pt x_{2m}}\right)
    +\sum_{i=1}^{2n}x_i^2\frac{\pt}{\pt x_{2n+2}}
  \right)
\end{displaymath}
and the volume form $\mu$ defined on $S\setminus\car(S)$ as
$\iota_N\vol$ takes the form
\begin{equation}\label{eq:volinhyper}
  \mu=\frac{n!}{2k^{2n+1}}\sqrt{\sum_{i=1}^{2n}x_i^2}
  \sum_{i=1}^{2n+1}(-1)^{i-1}x_i
  \bigwedge_{\substack{l=1 \\ l\not= i}}^{2n+1}\db x_l\;.
\end{equation}

In terms of the spherical coordinates
$(r,\varphi_1,\dots,\varphi_{2n-1})$ for $S\setminus\car(S)$ with
$r>0$, $\varphi_1,\dots,\varphi_{2n-2}\in[0,\pi]$ and
$\varphi_{2n-1}\in[0,2\pi)$, where
\begin{align*}
  x_i&=\sinh(kr)\cos(\varphi_i)\prod_{l=1}^{i-1}\sin(\varphi_l)
       \quad\text{for }i\in\{1,\dots,2n-1\}\;,\\
  x_{2n}&=\sinh(kr)\prod_{l=1}^{2n-1}\sin(\varphi_l)\;,\\
  x_{2n+1}&=\cosh(kr)\;,
\end{align*}
we have
\begin{displaymath}
  \sqrt{\sum_{i=1}^{2n}x_i^2}=\sinh(kr)\;.
\end{displaymath}
By further observing that
\begin{displaymath}
  \sum_{i=1}^{2n+1}(-1)^{i-1}x_i
  \bigwedge_{\substack{l=1 \\ l\not= i}}^{2n+1}\db x_l
\end{displaymath}
and
\begin{displaymath}
  k\left(\sinh(kr)\right)^{2n-1}
  \left(\prod_{i=1}^{2n-2}\left(\sin(\varphi_i)\right)^{2n-i-1}\right)
  \db r\wedge \db\varphi_1\wedge\dots\wedge\db\varphi_{2n-1}
\end{displaymath}
are expressions for the volume form on a $2n$-dimensional hyperboloid
in Cartesian coordinates and in spherical coordinates, respectively,
we deduce from~(\ref{eq:volinhyper}) that the volume form $\mu$ on
$S\setminus\car(S)$ embedded in
$(\ads^{2n+1},\distr,g)$ with parameter $k>0$ can be written as stated in
Proposition~\ref{propn:vol}. This concludes the proof of
Proposition~\ref{propn:vol}.

We further obtain
\begin{displaymath}
  \frac{\pt}{\pt r}=\sum_{i=1}^{2n+1}
  \frac{\pt x_i}{\pt r}\frac{\pt}{\pt x_i}
  =\frac{k}{\sqrt{\sum_{i=1}^{2n}x_i^2}}
  \left(\sum_{i=1}^{2n}x_ix_{2n+1}\frac{\pt}{\pt x_i}
    +\sum_{i=1}^{2n}x_i^2\frac{\pt}{\pt x_{2n+1}}\right)\;,
\end{displaymath}
which together with~(\ref{eq:quasiinhyper}) implies that, for
$\sdistr=\ker\quasi$,
\begin{displaymath}
  \left.\ker\db\quasi\right|_{\sdistr}=
  \operatorname{span}\left\{\frac{\pt}{\pt r}\right\}\;.
\end{displaymath}
Due to the fibre inner product $g$ on the contact structure $\distr$
arising by restricting a positive constant multiple of the Lorentzian
metric $\eta$ on $\R^{2n,2}$, we have
\begin{displaymath}
  g\left(\frac{\pt}{\pt r},\frac{\pt}{\pt r}\right)
  =\frac{1}{\sum_{i=1}^{2n}x_i^2}\left(\sum_{i=1}^{2n}x_i^2x_{2n+1}^2
    -\left(\sum_{i=1}^{2n}x_i^2\right)^2\right)
  =x_{2n+1}^2-\sum_{i=1}^{2n}x_i^2
  =1\;.
\end{displaymath}
It follows that the representative $R=\frac{\pt}{\pt r}$ on
$S\setminus\car(S)$ of the
characteristic foliation of $S$ is a vector field of
unit length. Using Proposition~\ref{propn:vol}, we compute
\begin{displaymath}
  \operatorname{div}_\mu\left(R\right)
  =\frac{2n k\cosh\left(kr\right)}{\sinh\left(kr\right)}
  =2nk\coth\left(kr\right)\;,
\end{displaymath}
which shows that the radial part of the stochastic process with
generator $\frac{1}{2}\Delta$ on $S\setminus\car(S)$ is the
one-dimensional diffusion process on $(0,\infty)$ with generator
\begin{displaymath}
  \frac{1}{2}\frac{\pt^2}{\pt r^2}
  +nk\coth\left(kr\right)\frac{\pt}{\pt r}\;.
\end{displaymath}
Since this yields a hyperbolic Bessel process of order $2n+1$ with
parameter $k>0$ that completes the proof of
Theorem~\ref{thm:radial}. Moreover, as in Section~\ref{sec:heis} and
Section~\ref{sec:sphere}, we conclude that the intrinsic sub-Laplacian
$\Delta$ on $S\setminus\car(S)$
is stochastically complete despite the
geometry induced on $S\setminus\car(S)$ not being geodesically
complete.
\begin{remark}
  Since the Lie group $\mathrm{SL}(2,\R)$ is isomorphic to the hyperboloid
  given by $x_1^2+x_2^2-x_3^2-x_4^2=-1$, see
  Wang~\cite[Remark~2.1]{jingwang}, we obtain the results
  from~\cite[Section~5.2]{BBCH} by taking $n=1$ in the above analysis.
  Chang, Markina and Vasil'ev~\cite{CMV}
  study further properties of the
  sub-Riemannian structure on the anti-de Sitter space $\ads^3$ as
  well as of a sub-Lorentzian structure on $\ads^3$.
\end{remark}

\bibliographystyle{alphaabbrv}
\bibliography{SR-hyper-biblio}

\end{document}